\newcommand{\A}{\mathcal{A}}
\newcommand{\B}{\mathcal{B}}
\newcommand{\E}{\mathcal{E}}
\newcommand{\M}{\mathcal{M}}
\newcommand{\N}{\mathcal{N}}
\newcommand{\both}{\mathrlap{\top}\bot}
\newcommand{\catname}[1]{{\normalfont\textbf{#1}}}
\newcommand{\inj}{\rightarrowtail}
\newcommand{\sur}{\twoheadrightarrow}
\newcommand{\KOmega}{K_0^{\mathrm{Gr}}}
\newcommand{\KnOmega}[2]{K_{#1}^{\mathrm{Gr}}[#2]}
\theoremstyle{definition}
\newtheorem{definition}{Definition}[section]
\newtheorem{remark}[definition]{Remark}
\theoremstyle{plain}
\newtheorem{theorem}[definition]{Theorem}
\newtheorem{lemma}[definition]{Lemma}
\newtheorem{proposition}[definition]{Proposition}
\title{Combinatorial relative algebraic K-theory}
\author{Jane Turner}
\date{\monthyeardate\today}
\begin{document}

\maketitle

\begin{abstract}
    In a preprint released in 2016, Daniel Grayson introduces a conjectural presentation of the (higher) relative algebraic $K$-groups using purely combinatorial means. In this paper, we will show that this presentation is isomorphic to the classically defined higher relative algebraic $K$-groups, and provide some reasons to believe the conjecture posed in this 2016 preprint to be true.
\end{abstract}

\section*{Introduction}

Algebraic $K$-theory attempts to derive information from mathematical objects, such as the exact category of finitely generated projective modules over a ring, by generalising the process of creating formal inverses to addition formed by direct sum. Given an exact category $\N$, we create a topological space $K\N$ which we call the $K$-theory space of $\N$, which encodes certain algebraic information of $\N$. From this space, we derive an algebraic invariant of $\N$ for each integer $n \geq 0$ called the $n$th $K$-group of $\N$ defined as the $n$th homotopy group of the $K$-theory space of $\N$, namely:
\[ K_n \N := \pi_n K \N. \]

Relative algebraic $K$-theory attempts to study the $K$-theory of exact functors $F: \M \to \N$ between exact categories $\M$ and $\N$. To this end, we define the relative $K$-theory space of $F$ as the homotopy fiber of the map between $K$-theory spaces of $\M$ and $\N$ induced by $F$. In other words:
\[ K[F] := \mathrm{hofib}(K\M \xrightarrow[]{F} K\N). \]
Naturally, echoing the definitions above, we define the $n$th relative $K$-group of $F$ as the $n$th homotopy group of its $K$-theory space
\[ K_n[F] := \pi_n K[F]. \]
Since they are defined using homotopy fibers, the relative $K$-groups of $F$ fit into a long exact sequence with the $K$-groups of $\M$ and $\N$ as shown below:
\begin{equation*}
    \begin{tikzcd}[column sep=small]
        \cdots \arrow[r]
        & K_{n+1} \M \arrow[r]
        & K_{n+1} \N \arrow[r]
        & K_n [F] \arrow[r]
        & K_n \M \arrow[r]
        & K_n \N \arrow[r]
        & \cdots
    \end{tikzcd}
\end{equation*}

In \cite{Gra12}, Grayson establishes a purely algebraic description of the $K$-groups of an exact category $\N$ using so-called $n$-dimensional binary acyclic multicomplexes. We will denote his presentation of the $n$th $K$-group of $\N$ by $K_n^{\mathrm{Gr}}\N$. Later, in \cite{Gra16}, Grayson constructs a conjectural presentation of the $n$th relative $K$-groups of an exact functor $F: \M \to \N$ between exact categories $\M$ and $\N$ in terms of (binary) complexes in $\M$ and $\N$. Grayson then shows that this presentation also sits in a long exact sequence
\begin{equation*}
    \begin{tikzcd}[column sep=tiny]
        \cdots \arrow[r]
        & K_{n+1}^{\mathrm{Gr}} \M \arrow[r]
        & K_{n+1}^{\mathrm{Gr}} \N \arrow[r]
        & \KnOmega{n}{F} \arrow[r]
        & K_n^{\mathrm{Gr}} \M \arrow[r]
        & K_n^{\mathrm{Gr}} \N \arrow[r]
        & \cdots
    \end{tikzcd}
\end{equation*}

In this paper, as our main result, we construct maps
\[ \Phi: \KnOmega{n}{F} \to K_n[F] \]
and prove that these maps are isomorphisms (\Cref{thm:MainResult}). In the process of proving this result, we make explicit the isomorphism $K_1^{\mathrm{Gr}}\N \to K_1 \N$ given in \cite[Corollary~5.10]{Gra12} (\Cref{prop:K1Elements}). In \Cref{lem:NenashevCompatible}, we show that this isomorphism is compatible with the isomorphism $K_1^{\mathrm{Nen}} \N \to K_1 \N$ given in \cite{Nen98} (where here we denote Nenashev's presentation of $K_1$ by $K_1^{\mathrm{Nen}} \N$), via the obvious homommorphism $K_1^{\mathrm{Nen}} \N \to K_1^{\mathrm{Gr}} \N$ shown to be an isomorphism in \cite{KKW18}.

To conclude this introduction, we will provide an overview of the contents of this paper by section. In section $1$ of this paper, we outline various classical constructions of absolute and relative $K$-theory. In particular, we recall the model of the $K$-theory space of an exact category $\N$ given in \cite{GSVW92}, which generalises the $G$-construction of Gillet and Grayson given in \cite{GG87} to the situation of an exact category with weak equivalences in the sense of \cite[Definition~A.1]{Gra12}. We also introduce the relative $K$-theory space $K[F]$ of an exact functor $F: \M \to \N$ as the homotopy fiber $\mathrm{hofib}(K\M \to K\N)$ of the induced map from the $K$-theory space of $\M$ to the $K$-theory space of $\N$, as well as purely combinatorial presentations given by Daniel Grayson in \cite{Gra12} and \cite{Gra16} respectively for $K_1$ of an exact category $\N$ (denoted in this paper by $K_1^{\mathrm{Gr}} \N$) and for $K_0$ of an exact functor $F: \M \to \N$ between exact categories $\M$ and $\N$ (denoted in this paper by $K_0^{\mathrm{Gr}}[F]$). In section $2$, we define a homomorphism $\Phi: K_0^{\mathrm{Gr}}[F] \to K_0[F]$ and prove that this homomorphism is an isomorphism. Over the course of the proof, we provide a concrete description of the isomorphism $K_1^{\mathrm{Gr}} \N \cong K_1 \N$ given in \cite{Gra12}. In section $3$, we recall the presentation of $K_1$ of an exact category $\N$ given by Alexander Nenashev in \cite{Nen98} (denoted in this paper by $K_1^{\mathrm{Nen}} \N$) and show that the isomorphism $K_1^{\mathrm{Nen}} \N \to K_1 \N$ given by Nenashev is compatible with Grayson's isomorphism $K_1^{\mathrm{Gr}} \N \to K_1 \N$. Finally, in section $4$, we recall the full constructions given in \cite{Gra12} and \cite{Gra16} of the $n$th $K$-groups of an exact category $\N$ (denoted by $K_n^{\mathrm{Gr}} \N$) and of an exact functor $F: \M \to \N$ between exact categories $\M$ and $\N$ (denoted in this paper by $K_n^{\mathrm{Gr}}[F]$), and we define homomorphisms $K_n^{\mathrm{Gr}}[F] \to K_n[F]$ and prove that this is an isomorphism.

\section{Constructions of \texorpdfstring{$K$}{}-theory}

Let $\N$ be an exact category. Following \cite{GG87}, we define the simplicial category $G_{\bullet} \N$ as follows. Given some $n \geq 0$, the category $G_n \N$ consists of diagrams in $\N$ of the following form:
\begin{equation*}
    \begin{tikzcd}[column sep=small]
        & A_0 \arrow[rr, rightarrowtail]
        & & A_1 \arrow[rr, rightarrowtail] \arrow[dd, twoheadrightarrow, dashed]
        & & A_2 \arrow[rr, rightarrowtail] \arrow[dd, twoheadrightarrow, dashed]
        & & \cdots \arrow[rr, rightarrowtail]
        & & A_n \arrow[dd, twoheadrightarrow, dashed] \\
        B_0 \arrow[rr, rightarrowtail]
        & & B_1 \arrow[rr, rightarrowtail, crossing over] \arrow[dr, twoheadrightarrow, dashed]
        & & B_2 \arrow[rr, rightarrowtail, crossing over] \arrow[dr, twoheadrightarrow, dashed]
        & & \cdots \arrow[rr, rightarrowtail]
        & & B_n \arrow[dr, twoheadrightarrow, dashed] & \\
        & & & C_{01} \arrow[rr, rightarrowtail]
        & & C_{02} \arrow[rr, rightarrowtail] \arrow[d, twoheadrightarrow]
        & & \cdots \arrow[rr, rightarrowtail]
        & & C_{0n} \arrow[d, twoheadrightarrow] \\
        & & & & & C_{12} \arrow[rr, rightarrowtail] \arrow[rrrrdd, "\ddots" description, phantom]
        & & \cdots \arrow[rr, rightarrowtail]
        & & C_{1n} \arrow[d, twoheadrightarrow]\\
        & & & & & & & & & \vdots \arrow[d, twoheadrightarrow]\\
        & & & & & & & & & C_{nn}
    \end{tikzcd}
\end{equation*}
where $A_i \inj A_j \sur C_{ij}$ and $B_i \inj B_j \sur C_{ij}$ are exact for all $0 \leq i < j \leq n$. In other words, an object of $G_n \N$ is a pair of $n$-step admissible filtrations in $\N$ ($A_i$ and $B_i$) which agree on all successive subquotients, including the data of the choice of said subquotients. A morphism in this category is a map of such diagrams. If $\N$ is equipped with a subcategory $w\N$ of weak equivalences in the sense of \cite{Gra12}, we can equip $G_n \N$ with a subcategory $w G_n \N$ of weak equivalences consisting of morphisms that are level-wise weak equivalences in $w \N$. These subcategories themselves form a simplicial category $wG_{\bullet}\N$, which we can use to create a topological space by taking the nerve to form a bisimplicial set which we then take the diagonal and apply geometric realisation, which we denote as follows:
\[ \lvert wG\N \rvert := \lvert \mathrm{diag}(N_{\bullet} w G_{\bullet} \N) \rvert. \]
This construction is outlined in \cite{GSVW92}, where it is verified that when the weak equivalences are just the isomorphisms in $\N$, the space $\lvert iG\N \rvert$ is homotopy equivalent to the classically defined $K$-theory space of $\N$. For the purposes of this paper, we will use this space as the model for the $K$-theory space, namely:
\[ K\N := \lvert iG\N \rvert. \]
We will now also give meaning to the $n$th $K$-group of $\N$, which we define as the $n$th homotopy group of the $K$-theory space of $\N$, namely:
\[ K_n \N := \pi_n K\N. \]
In \cite[Theorem~2.6]{GSVW92}, they also verify that this construction is homotopy equivalent to the loop space of the space obtained from Waldhausen's $S_{\bullet}$ construction given in \cite{Wal85}, namely:
\[ \lvert wG\N \rvert \simeq \lvert w S_{\bullet} \N \rvert \]
and so we will write $K w\N := \lvert wG\N \rvert$ for the $K$-theory space of any exact category with weak equivalences.
This result allows us to apply the result given in \cite[Theorem~V.2.2]{Wei13} that the $K$-theory of the category of bounded chain complexes with weak equivalences given by quasi-isomorphisms is the same as regular $K$-theory, namely:
\[ K\N \simeq K qC\N. \]

\begin{remark}\label{rem:ComSquare}
    A $1$-simplex of the space $K\N = \lvert iG\N \rvert$ (namely, an element of $N_1 w G_1 \N$) consists of the following data:
    \begin{equation*}
        \begin{tikzcd}
            A \arrow[rr, rightarrowtail] \arrow[ddd, "\simeq"]
            & & B \arrow[drr, twoheadrightarrow] \arrow[ddd, "\simeq"] & & \\
            & & & & E \arrow[ddd, "\simeq"] \\
            & A' \arrow[rr, rightarrowtail, crossing over] \arrow[ddd, "\simeq"]
            & & B' \arrow[ur, twoheadrightarrow] & \\
            C \arrow[rr, rightarrowtail]
            & & D \arrow[drr, twoheadrightarrow] & & \\
            & & & & F \\
            & C' \arrow[rr, rightarrowtail]
            & & D' \arrow[ur, twoheadrightarrow] \arrow[from=uuu, "\simeq" near start, crossing over]
        \end{tikzcd}
    \end{equation*}
    for this $1$-simplex we write $\left( \substack{A \\ A'} \right) \to \left( \substack{D \\ D'} \right)$. Now, using this data, we write
    \begin{equation*}
        \begin{tikzcd}
            \left( \substack{A \\ A'} \right) \arrow[r, rightarrowtail]
            & \left( \substack{B \\ B'} \right)
        \end{tikzcd}
    \end{equation*}
    for the $1$-simplex of $Kw\N$ arising from degenerating an element of $N_0 w G_1 \N$, and we write
    \begin{equation*}
        \begin{tikzcd}
            \left( \substack{A \\ A'} \right) \arrow[r, "\simeq"]
            & \left( \substack{C \\ C'} \right)
        \end{tikzcd}
    \end{equation*}
    for the $1$-simplex in $K w\N$ arising from degenerating an element of $N_1 w G_0 \N$. It turns out that any $1$-simplex is homotopic to a concatenation of one of each of the above two kinds of $1$-simplices. This homotopy can be described by the following diagram:
    \begin{equation*}
        \begin{tikzcd}
            \left( \substack{A \\ A'} \right) \arrow[r, rightarrowtail] \arrow[dr] \arrow[d, "\simeq"]
            & \left( \substack{B \\ B'} \right) \arrow[d, "\simeq"]\\
            \left( \substack{C \\ C'} \right) \arrow[r, rightarrowtail]
            & \left( \substack{D \\ D'} \right)
        \end{tikzcd}
    \end{equation*}
    where the two triangles bound $2$-simplices arising from face and degeneracy maps in the bisimplicial set to obtain non-degenerate simplices in the diagonal. Setting $\left( \substack{A \\ A'} \right) = \left( \substack{B \\ B'} \right) = \left( \substack{C \\ C'} \right)$, we obtain the following diagram:
    \begin{equation*}
        \begin{tikzcd}
            \left( \substack{A \\ A'} \right) \arrow[r, bend left, "\simeq"] \arrow[r, bend right, rightarrowtail]
            & \left( \substack{D \\ D'} \right)
        \end{tikzcd}
    \end{equation*}
    giving us a homotopy between $1$-simplices of different types between two set $0$-simplices.
\end{remark}

Given an exact functor $F: w\M \to v\N$ between exact categories with weak equivalences $w\M$ and $v\N$, we define the relative $K$-theory space of $F$ as the homotopy fiber of the map $Kw\M \to Kv\N$ induced by $F$, which we denote as follows:
\[ K[w\M \xrightarrow[]{F} v\N] := \mathrm{hofib}(Kw\M \xrightarrow[]{KF} Kv\N). \]
Note that this is the same notation as used in \cite{Gra12}, but we use the homotopy fiber rather than the cofiber as we are not in the situation of spectra. Unless explicitly stated, we will operate in the situation where the weak equivalences are just isomorphisms. Similarly to the absolute case, taking the $n$th homotopy group of this space we define the $n$th relative $K$-group of $F$ as
\[ K_n[F] := \pi_n K[F]. \]
Since $K_n[F]$ is defined using homotopy groups of a homotopy fiber, we obtain a long exact sequence
\begin{equation*}
    \begin{tikzcd}[column sep=small]
        \cdots \arrow[r]
        & K_{n+1} \M \arrow[r]
        & K_{n+1} \N \arrow[r]
        & K_n[F] \arrow[r]
        & K_n \M \arrow[r]
        & K_n \N \arrow[r]
        & \cdots
    \end{tikzcd}
\end{equation*}
arising from the homotopy fiber sequence
\begin{equation*}
    \begin{tikzcd}
        K[F] \arrow[r]
        & K\M \arrow[r]
        & K\N
    \end{tikzcd}
\end{equation*}

Recall given an exact category with weak equivalences $w\N$ we have the exact category with weak equivalences $w\E\N$ whose objects consist of short exact sequences in $\M$ and whose arrows are commutative diagrams, where weak equivalences are inherited from $w\N$. We have canonical exact functors $s, t, q: w\E\N \to w\N$ which send a short exact sequence $A \inj B \sur C$ to its source, $A$, its target, $B$, and its quotient, $C$, respectively. We have the following corollary of the additivity theorem applied to the homotopy fiber:

\begin{lemma}\label{lem:RelativeAdditivity}
    Given an exact functor between exact categories with weak equivalences $F: w\A \to w\B$, the induced map
    \[ (s, q): K[w\E\A \xrightarrow[]{F} w\E\B] \to K[w\A^2 \xrightarrow[]{F \times F} w\B^2] \]
    is a homotopy equivalence.
\end{lemma}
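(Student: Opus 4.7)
The plan is to deduce this relative additivity statement from the classical additivity theorem for $K$-theory of exact categories with weak equivalences, combined with a standard fact about homotopy fibers of a map of squares in which the horizontal arrows are equivalences.

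First, I would assemble the evident commutative square of $K$-theory spaces
\begin{equation*}
    \begin{tikzcd}
        Kw\E\A \arrow[r, "(s{,}q)"] \arrow[d, "KF"']
        & Kw\A^2 \arrow[d, "K(F\times F)"] \\
        Kw\E\B \arrow[r, "(s{,}q)"']
        & Kw\B^2
    \end{tikzcd}
\end{equation*}
which commutes since the source and quotient functors $s, q$ are natural with respect to the exact functor $F$. By the classical additivity theorem for exact categories with weak equivalences (in the form proved in \cite{GSVW92} for the $G$-construction), the two horizontal maps $(s,q)$ are both homotopy equivalences.

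Next, I would observe that the relative $K$-theory spaces $K[w\E\A \xrightarrow{F} w\E\B]$ and $K[w\A^2 \xrightarrow{F\times F} w\B^2]$ are by definition the homotopy fibers of the two vertical maps in the square above, and the induced map between these homotopy fibers is precisely the map $(s,q)$ in the statement of the lemma. A standard fact about homotopy fibers says that, given a commutative square of pointed spaces, the map induced between the homotopy fibers of the two vertical arrows is a weak equivalence whenever the two horizontal arrows are weak equivalences; this follows from the long exact sequence of homotopy groups and the five lemma, or more directly from the fact that the homotopy fiber construction sends homotopy equivalences to homotopy equivalences. Applying this fact to our square yields the claim.

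The main point (and the only nontrivial ingredient) is the classical additivity theorem giving that $(s,q)\colon Kw\E\A \to Kw\A^2$ and $(s,q)\colon Kw\E\B \to Kw\B^2$ are homotopy equivalences; everything else is a formal manipulation of homotopy fibers. There is no real obstacle beyond making sure the square genuinely commutes on the nose (rather than up to homotopy) so that the induced map on homotopy fibers is well-defined without having to choose a homotopy — and this is immediate from the strict naturality of $s$ and $q$.
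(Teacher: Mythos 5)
Your proof is correct and is essentially the same as the paper's: both arguments invoke the additivity theorem for the $G$-construction of \cite{GSVW92} to establish that the two maps $(s,q)\colon Kw\E\A \to Kw\A^2$ and $(s,q)\colon Kw\E\B \to Kw\B^2$ are homotopy equivalences, and then deduce that the induced map on the homotopy fibers of $F$ and $F\times F$ is an equivalence via the five lemma for long exact sequences of homotopy groups. The only difference is presentational — you draw the $(s,q)$ arrows horizontally and the $F$ arrows vertically, while the paper transposes the square — but the underlying comparison of fiber sequences is identical.
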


\begin{proof}
    This follows from the additivity theorem \cite[Theorem~2.10]{GSVW92} applied to the second and third vertical arrows in the following diagram:
    \begin{equation*}
        \begin{tikzcd}
            K[w\E\A \xrightarrow[]{F} w\E\B] \arrow[r] \arrow[d, "{(s, q)}"]
            & Kw\E\A \arrow[r, "F"] \arrow[d, "{(s, q)}"]
            & Kw\E\B \arrow[d, "{(s, q)}"] \\
            K[w\A^2 \xrightarrow[]{F \times F} w\B^2] \arrow[r]
            & Kw\A^2 \arrow[r, "F \times F"]
            & Kw\B^2
        \end{tikzcd}
    \end{equation*}
\end{proof}

We will now detail the construction given in \cite{Gra12}. Given an exact category $\N$, we denote the full subcategory of $C\N$ of bounded chain complexes that are also acyclic by $C^q \N$. Recall from \cite[Definition~3.1]{Gra12} that a \emph{binary chain complex} is a triple $M = (M_{\bullet}, d, d')$, where $M_{\bullet}$ is a $\mathbb{Z}$-graded object in $\N$, and $d$ and $d'$ are degree $-1$ maps $d, d': M_{\bullet} \to M_{\bullet}$ such that $d^2=d'^2=0$. We write $\top M := (M_{\bullet}, d)$ and $\bot M := (M_{\bullet}, d')$ for the \emph{top} and \emph{bottom} of $M$ respectively, and $\both M$ for the pair $(\top M, \bot M)$ in $C\N^2$. We denote the category of bounded binary chain complexes in $\N$ by $B\N$, and similarly we write $B^q\N$ for the binary chain complexes in $\N$ such that both $\top M$ and $\bot M$ are acyclic. Given a chain complex $M = (M_{\bullet}, d)$ in $C\N$, we write $\Delta M$ for the binary chain complex $(M_{\bullet}, d, d)$, which we call the \emph{diagonal} of $M$. Following Grayson, we denote his presentation of $K_1$ as follows:
\[ K_1^{\mathrm{Gr}}(\N) := \mathrm{coker}(K_0 C^q\N \xrightarrow[]{\Delta} K_0 B^q\N) \]
which was shown in \cite[Corollary~7.4]{Gra12} to be isomorphic to $K_1(\N)$. Since $\Delta$ is split by $\bot$ as functors, we identify this group with
\[ \mathrm{ker}(K_0 B^q\N \xrightarrow[]{\bot} K_0 C^q \N). \]

Now, given an exact functor $F: \M \to \N$, Grayson has the following construction given in \cite{Gra16} to define relative $K$-theory of this functor. We have a category $B[F]$ whose objects consist of triples $(M, N, u)$ with $M \in C\M^2, N \in B\N$ and $u: FM \to \both N$ a quasi-isomorphism. A morphism $(M, N, u) \to (M', N', u')$ in this category is a pair $(\varphi: M \to M', \psi: N \to N')$ such that $\both \psi \circ u = u' \circ F\varphi$. This category can also be equipped with weak equivalences in the sense described in \cite[Definition~A.1]{Gra12}. A morphism $(\varphi, \psi)$ is a weak equivalence if $\varphi$ is a quasi-isomorphism and $\psi$ is an isomorphism. We denote this category of weak equivalences by $pB[F]$. We adopt similar definitions as above for the category $C[F]$, whose objects consist of triples $(M, N, u)$ where $M \in C\M, N \in C\N$ and $u: M \to N$ is a quasi-isomorphism. Given any exact category $\A$ with a subcategory $w\A$ of weak equivalences, we will denote the pair $(\A, w\A)$ by just $w\A$ by abuse of notation, and by $K_0 w\A$, we mean the abelian group generated by isomorphism classes of objects of $\A$, with the usual short exact sequence relation of $K_0$ together with a relation $[A]=[A']$ whenever there is a weak equivalence $A \to A'$. Now, following \cite{Gra16}, we denote Grayson's presentation of $K_0[F]$ as follows:
\[ \KOmega[F] := \mathrm{coker}(K_0 p C[F] \xrightarrow[]{\Delta} K_0 p B[F]) \]
where $\Delta: C[F] \to B[F]$ is the obvious functor defined coordinate-wise.

\section{Grayson's Relative \texorpdfstring{$K_0$}{} is isomorphic to the classically defined group}

Recall our setup of an exact functor $F: \M \to \N$ and we now define a homomorphism
\[ \Phi: \KOmega[F] \to K_0[qC\M \xrightarrow[]{F} qC\N] \cong K_0[F] \]
which we will prove to be an isomorphism.
Recall given a continuous map $f: X \to Y$ between pointed spaces with base points $x_0$ and $y_0$, the \emph{homotopy fiber of $f$} is the topological space
\[ \mathrm{hofib}(f) := \{ (x, \alpha) \in X \times Y^I \mid \alpha(0) = y_0, \alpha(1) = f(x) \}. \]
Given an object $X = (M, N, u)$ of $B[F]$, i.e. a generator of $\KOmega[F]$, we define $\Phi[X]$ to be the connected component of the point
\[ \left( \left( \substack{\top M \\ \bot M} \right), \begin{tikzcd}[column sep=small] \left( \substack{0 \\ 0} \right) \arrow[r, rightarrowtail] & \left( \substack{\top N_{[0, 0]} \\ \bot N_{[0, 0]}} \right) \arrow[r, rightarrowtail] & \left( \substack{\top N_{[0, 1]} \\ \bot N_{[0, 1]}} \right) \arrow[r, rightarrowtail] & \cdots \arrow[r, rightarrowtail] & \left( \substack{\top N \\ \bot N} \right) & \left( \substack{\top FM \\ \bot FM} \right) \arrow[l, "\simeq"'] \end{tikzcd} \right) \]
in $K[qC\M \xrightarrow[]{F} qC\N]$, where the $1$-simplex labeled by ``$\xrightarrow[]{\simeq}$" in the second component is given by $(\top u, \bot u)$ and the other $1$-simplices labeled by ``$\inj$" arise from the na\"ive filtration of $\top N$ and $\bot N$.

\begin{theorem}\label{thm:RelativeK0}
    The assignment
    \[ X \mapsto \Phi(X) \]
    induces a well-defined isomorphism
    \[ \KOmega[F] \to K_0[qC\M \xrightarrow[]{F} qC\N]. \]
\end{theorem}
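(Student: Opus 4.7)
The plan is to promote $\Phi$ to a well-defined group homomorphism $\KOmega[F] \to K_0[F]$ and then show it is an isomorphism by fitting it into the comparison of long exact sequences and invoking the five lemma, reducing the problem to the known isomorphisms $K_i^{\mathrm{Gr}} \cong K_i$ for absolute $K$-theory.

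For well-definedness, I would check sequentially that $\Phi$ respects isomorphisms in $B[F]$ (immediate from functoriality of each constituent $0$- and $1$-simplex), weak equivalences in $pB[F]$ (the quasi-isomorphism in the $\M$-coordinate provides a $1$-simplex in $KqC\M$ connecting the two target $0$-simplices, while the isomorphism in the $\N$-coordinate identifies the naïve filtration paths in $KqC\N$), and short exact sequences (via \Cref{lem:RelativeAdditivity} together with componentwise absolute additivity in $qC\M^2$ and $qC\N^2$). Then I would show $\Phi \circ \Delta = 0$ on $K_0 pC[F]$: for $(M, N, u) \in C[F]$, the image $\Phi(\Delta(M, N, u))$ has $\M$-component $(M, M)$, which is connected to $(0,0)$ by the path in $KqC\M$ given by the naïve filtration of $M$ applied diagonally; composing the $F$-image of this path with the quasi-isomorphism $u$ produces precisely the homotopy in $KqC\N$ needed to connect the class to the basepoint of the homotopy fibre. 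Morally, the vanishing holds because the top and bottom paths are identical, so their ``difference'' in the homotopy fibre is null.

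To establish that $\Phi$ is an isomorphism, I would embed it in the ladder
\begin{equation*}
    \begin{tikzcd}[column sep=small]
        K_1^{\mathrm{Gr}} \M \arrow[r] \arrow[d, "\cong"]
        & K_1^{\mathrm{Gr}} \N \arrow[r] \arrow[d, "\cong"]
        & \KOmega[F] \arrow[r] \arrow[d, "\Phi"]
        & K_0^{\mathrm{Gr}} \M \arrow[r] \arrow[d, "\cong"]
        & K_0^{\mathrm{Gr}} \N \arrow[d, "\cong"] \\
        K_1 \M \arrow[r]
        & K_1 \N \arrow[r]
        & K_0[F] \arrow[r]
        & K_0 \M \arrow[r]
        & K_0 \N
    \end{tikzcd}
\end{equation*}
whose top row is Grayson's long exact sequence from \cite{Gra16}, whose bottom row comes from the homotopy fibre sequence, and whose outer vertical maps are the standard identifications $K_0^{\mathrm{Gr}} \cong K_0$ (elementary) and $K_1^{\mathrm{Gr}} \cong K_1$ from \cite[Corollary~5.10]{Gra12}. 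The two rightmost squares commute because the Grayson boundary $\KOmega[F] \to K_0^{\mathrm{Gr}} \M$ sends $[(M, N, u)] \mapsto [\top M] - [\bot M]$, which is visibly the $\pi_0 K\M$-image of $\Phi(M, N, u)$ under the pair-representation $(A, B) \mapsto [A] - [B]$. The critical square is the $K_1$-boundary square: on the Grayson side an acyclic binary complex $N$ maps to $[(0, N, 0)]$, and on the classical side a loop $\ell$ in $K\N$ maps to the pair $(0, \ell)$ in the homotopy fibre. Commutativity reduces to the statement that the isomorphism $K_1^{\mathrm{Gr}} \N \cong K_1 \N$ sends $[N]$ to the loop obtained by concatenating the naïve filtration of $\top N$ with the reverse of the naïve filtration of $\bot N$, which is exactly the explicit description to be proved as \Cref{prop:K1Elements}. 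Once this square commutes, the five lemma yields the isomorphism.

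The main obstacle is precisely the explicit identification of $K_1^{\mathrm{Gr}} \N \to K_1 \N$ needed for the critical square. Grayson's proof in \cite[Corollary~5.10]{Gra12} establishes the isomorphism abstractly but does not pinpoint the image of a given binary acyclic complex; tracking this through the constructions and matching it with the loop produced by $\Phi$ is the central technical work of the section and is what \Cref{prop:K1Elements} supplies.
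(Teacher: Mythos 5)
Your overall strategy is the same as the paper's: establish well-definedness of $\Phi$ by checking compatibility with each of the three relations defining $\KOmega[F]$, then fit $\Phi$ into the five-lemma ladder comparing Grayson's exact sequence with the homotopy fibre sequence, and reduce commutativity of the critical $K_1$-square to the explicit description of $K_1^{\mathrm{Gr}}\N \cong K_1 qC\N$ established in \Cref{prop:K1Elements}. Your treatment of the short-exact-sequence relation via \Cref{lem:RelativeAdditivity}, of the weak-equivalence relation via an explicit path, and of the outer squares (including the identification of the boundary map $\KOmega[F] \to K_0\M$ as $[\top M]-[\bot M]$) is all in line with the paper.

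The one step that does not close as written is the vanishing of $\Phi$ on diagonal objects. You propose to contract the $\M$-component $\left( \substack{M \\ M} \right)$ of $\Phi(\Delta(M,N,u))$ along the diagonal na\"ive filtration of $M$ and claim that $F$ of this contraction, composed with $u$, ``produces precisely the homotopy in $KqC\N$ needed.'' But the $\N$-leg of $\Phi(\Delta(M,N,u))$ runs through the na\"ive filtration of $N$ and then back along $u$, whereas $F$ of your $\M$-contraction runs through the na\"ive filtration of $FM$; filling the square that certifies nullhomotopy in the homotopy fibre requires a homotopy rel endpoints between these two distinct diagonal paths in $KqC\N$, and producing that homotopy is exactly what remains to be done, not something obtained for free by ``composing with $u$.'' The paper handles this differently, invoking the involution that swaps the top and bottom filtrations (an extension of \cite[Theorem~3.1]{GG87}) together with the fact that the homotopy fibre of an $H$-map of $H$-spaces is again an $H$-space, so that a diagonal point is fixed by a map acting as inversion on $\pi_0$. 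You should either adopt that involution argument or explicitly construct the intermediate homotopy carrying the $\N$-leg from the filtration path of $N$ to that of $FM$ using $u$ levelwise; as it stands, ``morally the top and bottom paths are identical'' captures the right intuition but is not a proof.
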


\begin{proof}
    First, we will show that this is well-defined, and so we need to check that it is compatible with the relations of $\KOmega[F]$. To this end, suppose we have a short exact sequence
    \[ (M', N', u') \inj (M, N, u) \sur (M'', N'', u'') \]
    in $B[F]$. Then we have the point
    \begin{equation*}
        \left(
        \begin{tikzcd}[row sep=2.4em]
            \left( \substack{\top M' \\ \bot M'} \right) \arrow[d, rightarrowtail]\\
            \left( \substack{\top M \\ \bot M} \right) \arrow[d, twoheadrightarrow]\\
            \left( \substack{\top M'' \\ \bot M''} \right)
        \end{tikzcd}
        ,
        \begin{tikzcd}
            \left( \substack{0 \\ 0} \right) \arrow[r, rightarrowtail] \arrow[d, rightarrowtail]
            & \left( \substack{\top N'_{[0, 0]} \\ \bot N_{[0, 0]}} \right) \arrow[r, rightarrowtail] \arrow[d, rightarrowtail]
            & \cdots \arrow[r, rightarrowtail]
            & \left( \substack{\top N'_{[0, n]} \\ \bot N_{[0, n]}} \right) \arrow[d, rightarrowtail]
            & \left( \substack{\top FM' \\ \bot FM'} \right) \arrow[l, "\simeq"'] \arrow[d, rightarrowtail]\\
            \left( \substack{0 \\ 0} \right) \arrow[r, rightarrowtail] \arrow[d, twoheadrightarrow]
            & \left( \substack{\top N_{[0, 0]} \\ \bot N_{[0, 0]}} \right) \arrow[r, rightarrowtail] \arrow[d, twoheadrightarrow]
            & \cdots \arrow[r, rightarrowtail]
            & \left( \substack{\top N_{[0, n]} \\ \bot N_{[0, n]}} \right) \arrow[d, twoheadrightarrow]
            & \left( \substack{\top FM \\ \bot FM} \right) \arrow[l, "\simeq"'] \arrow[d, twoheadrightarrow]\\
            \left( \substack{0 \\ 0} \right) \arrow[r, rightarrowtail]
            & \left( \substack{\top N''_{[0, 0]} \\ \bot N''_{[0, 0]}} \right) \arrow[r, rightarrowtail]
            & \cdots \arrow[r, rightarrowtail]
            & \left( \substack{\top N''_{[0, n]} \\ \bot N''_{[0, n]}} \right)
            & \left( \substack{\top FM'' \\ \bot FM''} \right) \arrow[l, "\simeq"']
        \end{tikzcd}
        \right)
    \end{equation*}
    in $K[q\E C\M \xrightarrow[]{F} q\E C\N]$, where
    \[ n = \mathrm{max}\{ \lvert \mathrm{supp}(N') \rvert, \lvert \mathrm{supp}(N) \rvert, \lvert \mathrm{supp}(N'') \rvert \}. \]
    By \Cref{lem:RelativeAdditivity}, replacing in this point the middle row by the direct sum of the top and bottom rows does not change the connected component it belongs to. Thus, applying the exact functor $\Phi$ yields
    \[ \Phi(M, N, u) = \Phi(M', N', u') + \Phi(M'', N'', u''), \]
    and so $\Phi$ is compatible with short exact sequences.
    
    Now suppose we have a diagonal object $X = (\Delta M, \Delta N, \Delta u)$ of $B[F]$, then due to an easy generalisation \cite[Theorem~3.1]{GG87} (namely that we obtain an involution by swapping the top and bottom filtrations) we have that $\Phi[X]=0$ since it is diagonal in the sense that the two filtrations in every simplex appearing in $\Phi[X]$ are equal, and the fact that the homotopy fiber of a H-map between H-spaces is itself a H-space.
    
    Finally, suppose that we have a map $\varphi: (M, N, u) \to (M', N', u')$ in $pB[F]$, namely the maps $\top M \to \top M'$ and $\bot M \to \bot M'$ are quasi-isomorphisms in $C\M$ and the map $N \to N'$ is an isomorphism in $B\N$. Then $\varphi$ gives us the path
    \begin{equation*}
        \left(
        \begin{tikzcd}
            \left( \substack{\top M \\ \bot M} \right) \arrow[d, "\simeq"]\\
            \left( \substack{\top M' \\ \bot M'} \right)
        \end{tikzcd}
        ,
        \begin{tikzcd}
            \left( \substack{0 \\ 0} \right) \arrow[r, rightarrowtail] \arrow[dr] \arrow[d, "\simeq"]
            & \left( \substack{\top N_{[0, 0]} \\ \bot N_{[0, 0]}} \right) \arrow[r, rightarrowtail] \arrow[dr] \arrow[d, "\simeq"]
            & \cdots \arrow[r, rightarrowtail] \arrow[dr]
            & \left( \substack{\top N_{[0, n]} \\ \bot N_{[0, n]}} \right) \arrow[d, "\simeq"]
            & \left( \substack{\top FM \\ \bot FM} \right) \arrow[l, "\simeq"'] \arrow[dl, "\simeq"'] \arrow[d, "\simeq"]\\
            \left( \substack{0 \\ 0} \right) \arrow[r, rightarrowtail]
            & \left( \substack{\top N'_{[0, 0]} \\ \bot N'_{[0, 0]}} \right) \arrow[r, rightarrowtail]
            & \cdots \arrow[r, rightarrowtail]
            & \left( \substack{\top N'_{[0, n]} \\ \bot N'_{[0, n]}} \right)
            & \left( \substack{\top FM' \\ \bot FM'} \right) \arrow[l, "\simeq"']
        \end{tikzcd}
        \right)
    \end{equation*}
    in $K[qC\M \xrightarrow[]{F} qC\N]$ from the point corresponding to the top row to the point corresponding to the bottom row, where
    \[ n = \mathrm{max} \{ \lvert \mathrm{supp}(N) \rvert, \lvert \mathrm{supp}(N') \rvert \}, \]
    and every triangle in the above diagram bounds a $2$-simplex, and so $\Phi [M, N, u] = \Phi [M', N', u']$, hence $\Phi$ is a well-defined homomorphism.
    
    To show $\Phi$ is an isomorphism by the $5$-lemma, it suffices to show that the following diagram with exact rows and vertical isomorphisms as indicated is commutative:
    \begin{equation*}
        \begin{tikzcd}[column sep=small]
            K_1^{\mathrm{Gr}} \M \arrow[r] \arrow[d, "\cong"]
            & K_1^{\mathrm{Gr}} \N \arrow[r] \arrow[d, "\cong"]
            & \KOmega[F] \arrow[r] \arrow[d, "\Phi"]
            & K_0 \M \arrow[r] \arrow[d, "\cong"]
            & K_0 \N \arrow[d, "\cong"]\\
            K_1 qC\M \arrow[r]
            & K_1 qC\N \arrow[r]
            & K_0[qC\M \xrightarrow[]{F} qC\N] \arrow[r]
            & K_0 qC\M \arrow[r]
            & K_0 qC\N.
        \end{tikzcd}
    \end{equation*}
    The top row of this diagram is defined to be the exact sequence established by Grayson in \cite[Corollary~1.9]{Gra16}, and the bottom row is the exact sequence arising from a homotopy fiber. The two vertical isomorphisms on the left are described by Grayson in \cite[Corollary~7.4]{Gra12}, and the two on the right are given by $[M]-[N] \mapsto [\substack{M \\ N}]$, viewing objects $M$ and $N$ as complexes concentrated in degree $0$. The first and fourth squares commute due to $F$ being an exact functor, and the third square commutes as an element $[M, N, u]$ is sent to $\chi(\top M) - \chi(\bot M)$ in $K_0 \M$ both via the horizontal map and via going around the square with the inverse of the isomorphism $K_0 \M \to K_0 qC\M$, which is given in \cite[Theorem~V.2.2]{Wei13}. It remains to show that the second square commutes. Given an object $N$ of $B^q \N$, we will prove in \Cref{prop:K1Elements} below that the element $[N]$ of $K_1^{\mathrm{Gr}}(\N)$ is sent to the element of $K_1 qC\N$ corresponding to the loop
    \begin{equation*}
        \left(
        \begin{tikzcd}
            \left( \substack{0 \\ 0} \right) \arrow[r, rightarrowtail]
            & \left( \substack{\top N_{[0, 0]} \\ \bot N_{[0, 0]}} \right) \arrow[r, rightarrowtail]
            & \cdots \arrow[r, rightarrowtail]
            & \left( \substack{\top N \\ \bot N} \right)
            & \left( \substack{0 \\ 0} \right) \arrow[l, "\simeq"']
        \end{tikzcd}
        \right)
    \end{equation*}
    where again the $1$-simplices labelled by ``$\rightarrowtail$" arise from the na\"ive filtration of $\top N$ and $\bot N$. This is then sent to the element of $K_0[F]$ arising from the point given by:
    \begin{equation*}
        \left(
        \left( \substack{0 \\ 0} \right),
        \begin{tikzcd}
            \left( \substack{0 \\ 0} \right) \arrow[r, rightarrowtail]
            & \left( \substack{\top N_{[0, 0]} \\ \bot N_{[0, 0]}} \right) \arrow[r, rightarrowtail]
            & \cdots \arrow[r, rightarrowtail]
            & \left( \substack{\top N \\ \bot N} \right)
            & \left( \substack{0 \\ 0} \right) \arrow[l, "\simeq"']
        \end{tikzcd}
        \right)
    \end{equation*}
    On the other hand, $[N]$ is sent to the element $[0, N, 0]$ of $\KOmega[F]$, which $\Phi$ sends to the same element as described above, and so the second square commutes.
\end{proof}

\begin{proposition}\label{prop:K1Elements}
    Given an object $N$ of $B^q \N$, the isomorphism
    \[ K_1^{\mathrm{Gr}}(\N) \to K_1 qC\N \]
    given in \cite{Gra12} sends the element $[N]-[\Delta \bot N]$ to the element $(*)$ corresponding to the loop
    \begin{equation*}
        \left(
        \begin{tikzcd}
            \left( \substack{0 \\ 0} \right) \arrow[r, rightarrowtail]
            & \left( \substack{\top N_{[0, 0]} \\ \bot N_{[0, 0]}} \right) \arrow[r, rightarrowtail]
            & \cdots \arrow[r, rightarrowtail]
            & \left( \substack{\top N \\ \bot N} \right)
            & \left( \substack{0 \\ 0} \right) \arrow[l, "\simeq"']
        \end{tikzcd}
        \right)
    \end{equation*}
    where the arrows labeled by ``$\rightarrowtail$" arise from the na\"ive filtration of $\top N$ and $\bot N$ and the arrow labelled ``$\xleftarrow[]{\simeq}$" arises from the quasi-isomorphism $0 \to N$.
\end{proposition}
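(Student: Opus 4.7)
The plan is to decompose Grayson's isomorphism $K_1^{\mathrm{Gr}}\N \cong K_1 qC\N$ into two explicit steps and trace the image of $[N] - [\Delta \bot N]$ through each. For the first step, I would use that the functor $(\top, \bot): B^q\N \to C^q\N \times C^q\N$ is an equivalence of categories and hence induces an isomorphism
\[ K_0 B^q\N / \Delta K_0 C^q\N \;\cong\; (K_0 C^q \N \oplus K_0 C^q\N) / \{(x, x)\} \;\cong\; K_0 C^q \N \]
sending $[N]$ to $[\top N] - [\bot N]$. Under this identification, since $\top(\Delta \bot N) = \bot N = \bot(\Delta \bot N)$, the element $[N] - [\Delta \bot N]$ is sent to $[\top N] - [\bot N] \in K_0 C^q \N$.

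For the second step, I would realise the shift isomorphism $K_0 C^q \N \cong K_1 qC \N$ concretely in the Gillet--Grayson $G$-model. For an acyclic complex $M$, the object $(\substack{M\\ 0})$ of $G_0(qC\N)$ represents $[M] \in K_0 qC\N$; the na\"ive filtration of $M$ gives an ``$\inj$''-type $1$-simplex from the basepoint $(\substack{0\\ 0})$ to $(\substack{M\\ 0})$, and the quasi-isomorphism $0 \to M$ gives a ``$\simeq$''-type $1$-simplex returning to the basepoint, assembling into a loop whose class should represent the shift of $[M]$ in $K_1 qC\N$. For a formal difference $[A] - [B]$ with both summands acyclic, I would use the single $G$-point $(\substack{A\\ B})$ and carry out the two filtrations (and the two quasi-isomorphisms) in the top and bottom slots simultaneously; applied to $[\top N] - [\bot N]$, this is possible precisely because $\top N$ and $\bot N$ share the underlying graded object $N_\bullet$, and it yields exactly the loop described in the statement.

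The hard part will be to verify that the two-step decomposition above genuinely reproduces the isomorphism of \cite[Corollary~7.4]{Gra12}. I would tackle this by reconstructing Grayson's argument directly inside the $G$-model: his identification $K_0 C^q \N \to K_1 \N$ ultimately comes from the fact that, $K$-theoretically, $C^q \N$ behaves like the loop space of $\N$, via a fiber sequence relating $C^q \N$, $C\N$ and $\N$ together with the equivalence $KqC\N \simeq K\N$ of \cite[Theorem~V.2.2]{Wei13}; the connecting map at the space level should be computed exactly by the na\"ive-filtration-plus-acyclicity-closure recipe described above, since these provide the canonical nullhomotopy witnessing that an acyclic complex is trivial modulo $C\N$. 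A subsidiary point to check is that the parallel ``$\inj$'' and ``$\simeq$'' $1$-simplices for the top and bottom slots really do assemble into a valid bisimplex in $N_1 iG_1 qC\N$, but this is precisely the gluing pattern already described in \Cref{rem:ComSquare}.
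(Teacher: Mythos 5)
Your first reduction is not correct. The functor $(\top,\bot)\colon B^q\N \to C^q\N \times C^q\N$ is \emph{not} an equivalence of categories: it is not essentially surjective, since a binary complex carries both differentials on a single underlying graded object while a pair $(A,B) \in C^q\N \times C^q\N$ need share no underlying graded object; and it is not full, since a morphism of binary complexes is one graded map compatible with both differentials simultaneously, whereas a morphism in the product is a pair of unrelated chain maps. Consequently the identification $K_0 B^q\N/\Delta K_0 C^q\N \cong K_0 C^q\N$ is unjustified, and in fact false: take $\N$ to be finite-dimensional $k$-vector spaces; then $K_0 iC^q\N$ is free abelian of countably infinite rank (generated by the shifts $(k\xrightarrow{1}k)[n]$, $n\in\mathbb{Z}$), while $K_1^{\mathrm{Gr}}\N \cong K_1\N \cong k^\times$. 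For the same reason the ``shift isomorphism'' $K_0 C^q\N \cong K_1 qC\N$ invoked in your second step cannot hold as stated — the genuine shift statement in Grayson's construction concerns $qC^\chi\N$ (Euler-characteristic-zero complexes with \emph{quasi}-isomorphisms), not $iC^q\N$. More fundamentally, Grayson's isomorphism does not factor through $K_0 C^q\N$ at all: the intermediate group in the actual chain of equivalences is $K_1 qB^b\N$, where $b$ denotes the weak equivalences which are quasi-isomorphisms after applying $\bot$. Your proposed two-step decomposition is therefore not a decomposition of the isomorphism you are supposed to be tracing, and the final paragraph of your proposal defers rather than resolves this compatibility problem.

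The paper's argument works inside the correct chain
\[ K_1^{\mathrm{Gr}}\N \longleftarrow K_1 qB^b\N \longrightarrow K_1 qC^\chi\N \longrightarrow K_1 qC\N \]
by exhibiting a single element $(**)$ of the middle group $K_1 qB^b\N$ and computing its image in both directions. The key device is the exact endofunctor $H$ of $B\N$ with $\top HA = \top A \oplus \bot A[1]$ and $\bot HA = \mathrm{cone}(\bot A \xrightarrow{1}\bot A)$: since $\bot HA$ is always acyclic, the maps of the na\"ive filtration become $b$-weak equivalences after applying $H$, so the $H$-transformed filtration of $N$ genuinely assembles into a loop in $KqB^b\N$. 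One then checks directly, via explicit $2$-simplices, that $(**)$ maps rightward to the loop $(*)$, and leftward to $[HN]-[H\Delta\bot N]$, which equals $[N]-[\Delta\bot N]$ in $K_1^{\mathrm{Gr}}\N$ using the short exact sequences $N \inj HN \sur \Delta\bot N[1]$ and $\Delta\bot N \inj H\Delta\bot N \sur \Delta\bot N[1]$. If you want to pursue a ``decompose and trace'' strategy, you must decompose along these actual intermediate groups, not through $K_0 C^q\N$.
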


Before we proceed with the proof of \cref{prop:K1Elements}, we will present a discussion of the homotopy equivalence given in \cite[Corollary~5.10]{Gra12}, with the caveat that we take as definition $K \Omega \N := K[iB^q\N \xrightarrow[]{\bot} iC^q\N]$. By a standard argument, this is equivalent to the formulation given in \cite{Gra12}, however this formulation will allow us to work entirely with spaces and homotopy fibers rather than spectra and cofibers. Grayson's homotopy equivalence is given by the following chain of equivalences:

Let $C^{\chi}\N$ denote the subcategory of $C\N$ consisting of complexes with vanishing Euler characteristic, which inherits quasi-isomorphisms as weak equivalences. Then \cite[Remark~5.8]{Gra12} tells us that the inclusion $C^{\chi}\N \hookrightarrow C\N$ induces a homotopy fibration sequence:
\[ K qC^{\chi}\N \to KqC\N \to ``K_0\N" \]
where $``K_0\N"$ denotes the Eilenberg-MacLane space whose only nonvanishing homotopy group is $K_0 \N$ at $\pi_0$. After applying $\Omega$ to this sequence, we obtain that the inclusion $C^{\chi}\N \hookrightarrow C\N$ induces a homotopy equivalence
\begin{equation}\label{eq:1}
    \Omega KqC^{\chi}\N \to \Omega KqC\N
\end{equation}

Let $bB\N \supset qB\N$ be the subcategory of weak equivalences of $B\N$ consisting of maps $f$ such that $\bot f$ is a map is $qC\N$. Then $B^b\N$ inherits quasi-isomorphisms as a subcategory of weak equivalences and \cite[Theorem~5.9]{Gra12} shows that the exact functor $\top: qB^b\N \to qC^{\chi}\N$ induces a homotopy equivalence
\begin{equation}\label{eq:2}
    KqB^b\N \to KqC^{\chi}\N.
\end{equation}

Waldhausen's fibration theorem \cite{Wal85} gives a homotopy fiber sequence
\[ KqB^b\N \to KqB\N \to KbB\N. \]
Following \cite[Theorem~4.8]{Gra12}, the map $\bot: KbB\N \to KqC\N$ is a homotopy equivalence. From here, observe the following diagram:
\begin{equation*}
    \begin{tikzcd}
        KqB^b\N \arrow[r] \arrow[d, dashed]
        & KqB\N \arrow[r] \arrow[d, "1"]
        & KbB\N \arrow[d, "\bot"]\\
        K[qB\N \xrightarrow[]{\bot} qC\N] \arrow[r]
        & KqB\N \arrow[r, "\bot"]
        & KqC\N
    \end{tikzcd}
\end{equation*}
The rows of this diagram are homotopy fiber sequences, and the map
\begin{equation}\label{eq:3}
    qB^b\N \to K[qB\N \xrightarrow[]{\bot} qC\N]
\end{equation}
is induced by the middle vertical arrow, and since the middle and right vertical arrows are homotopy equivalences, as is the left vertical arrow.

Here, we diverge in our treatment of this chain of homotopy equivalences, though the map we will arrive at will obviously be the same as the one given in \cite{Gra12}. By Waldhausen's fibration theorem, we have the following pair of fibration sequences:
\begin{align*}
    \Omega KqB\N & \to KiB^q\N \to KiB\N \to KqB\N \\
    \Omega KqC\N & \to KiC^q\N \to KiC\N \to KqC\N
\end{align*}
Assembling these sequences with maps induced by $\bot$, we obtain the following diagram:
\begin{equation*}
    \begin{tikzcd}
        \Omega K[qB\N \xrightarrow[]{\bot} qC\N] \arrow[r, dashed] \arrow[d]
        & K \Omega \N \arrow[r] \arrow[d]
        & * \arrow[d] \\
        \Omega KqB\N \arrow[r, "\partial"] \arrow[d, "\bot"]
        & KiB^q\N \arrow[r] \arrow[d, "\bot"]
        & KiB\N \arrow[d, "\bot"] \\
        \Omega KqC\N \arrow[r, "\partial"]
        & KiC^q\N \arrow[r]
        & KiC\N
    \end{tikzcd}
\end{equation*}
The columns of this diagram are homotopy fibration sequences, and the dashed arrow is hence induced by the map $\partial: \Omega KqB\N \to KiB^q\N$. Since the second and third rows are also homotopy fiber sequences, it follows by the nine lemma that the first row is a homotopy fiber sequence. Hence, the map
\begin{equation}\label{eq:4}
    \Omega K[qB\N \xrightarrow[]{\bot} qC\N] \to K \Omega \N
\end{equation}
is a homotopy equivalence.

Now we will present the proof of \Cref{prop:K1Elements}:
\begin{proof}
    We apply $\pi_0$ to the above discussed chain of homotopy equivalences to obtain a chain of isomorphisms
    \begin{equation*}
        \begin{tikzcd}
            K^{\mathrm{Gr}}_1(\N)
            & K_1 qB^b\N \arrow[r, squiggly] \arrow[l, squiggly]
            & K_1qC\N,
        \end{tikzcd}
    \end{equation*}
    where it is clear that $K_1^{\mathrm{Gr}}(\N) \cong \pi_0 K \Omega \N$ by definition. We will find an element of the group $K_1 qB^b \N$ and show that it is sent on the one hand to the element $[N]-[\Delta \bot N]$ of $K_1^{\mathrm{Gr}}(\N)$ and on the other hand to the element $(*)$ of $K_1 qC\N$, hence proving the statement. To this end, we introduce the exact functor $H: B\N \to B\N$ such that given a binary complex $A$, we have:
    \begin{align*}
        \top HA &= \top A \oplus \bot A [1] \\
        \bot HA &= \mathrm{cone}(\bot A) = \mathrm{cone}(\bot A \xrightarrow[]{1} \bot A).
    \end{align*}
    Then consider the element $(**)$ of $K_1 qB^b\N$ corresponding to the loop
    \begin{equation*}
        \begin{tikzcd}
            \left( \substack{0 \\ 0} \right) \arrow[r, tail]
            & \left( \substack{H(N_{[0, 0]}) \\ H(\Delta \bot N_{[0, 0]})} \right) \arrow[r, tail]
            & \cdots \arrow[r, tail]
            & \left( \substack{HN \\ H \Delta \bot N} \right)
            & \left( \substack{0 \\ 0} \right), \arrow[l, "\simeq"']
        \end{tikzcd}
    \end{equation*}
    where the arrows labelled ``$\rightarrowtail$" arise from applying $H$ to the na\"ive filtration of $N$, and the arrow labelled ``$\xleftarrow[]{\simeq}$" is a weak equivalence by construction of $H$. Now we will apply the following isomorphisms to this element
    \begin{equation*}
        \begin{tikzcd}
            K_1 qB^b\N \arrow[r, "(2)"]
            & K_1 qC^{\chi}\N \arrow[r, "(1)"]
            & K_1 qC\N
        \end{tikzcd}
    \end{equation*}
    when we do, we obtain the element of $K_1 qC\N$ corresponding to the loop
    \begin{equation*}
        \begin{tikzcd}
            \left( \substack{0 \\ 0} \right) \arrow[r, tail]
            & \left( \substack{\top N_{[0, 0]} \oplus \bot N_{[0, 0]}[1] \\ \bot N_{[0, 0]} \oplus N_{[0, 0]}[1]} \right) \arrow[r, tail]
            & \cdots \arrow[r, tail]
            & \left( \substack{\top N \oplus \bot N[1] \\ \bot N \oplus \bot N[1]} \right)
            & \left( \substack{0 \\ 0} \right) \arrow[l, "\simeq"']
        \end{tikzcd}
    \end{equation*}
    We will now show that this element is equal to $(*)$. Observe the following diagram:
    \begin{equation*}
        \begin{tikzcd}
            \left( \substack{0 \\ 0} \right) \arrow[r, tail] \arrow[dr, tail] \arrow[d, tail]
            & \left( \substack{\top N_{[0, 0]} \\ \bot N_{[0, 0]}} \right) \arrow[r, tail] \arrow[dr, tail] \arrow[d, tail]
            & \cdots \arrow[r, tail] \arrow[dr, tail]
            & \left( \substack{\top N \\ \bot N} \right) \arrow[d, tail]
            & \left( \substack{0 \\ 0} \right) \arrow[l, "\simeq"'] \arrow[dl] \arrow[d, tail]\\
            \left( \substack{0 \\ 0} \right) \arrow[r, tail]
            & \left( \substack{\top N_{[0, 0]} \oplus \bot N_{[0, 0]}[1] \\ \bot N_{[0, 0]} \oplus \bot N_{[0, 0]}[1]} \right) \arrow[r, tail]
            & \cdots \arrow[r, tail]
            & \left( \substack{\top N \oplus \bot N[1] \\ \bot N \oplus \bot N[1]} \right)
            & \left( \substack{0 \\ 0} \right) \arrow[l, "\simeq"']
        \end{tikzcd}
    \end{equation*}
    The diagonal $1$-simplices labelled by ``$\rightarrowtail$" are given by the following short exact sequences:
    \begin{align*}
        \top N_{[0, i]} & \inj \top N_{[0, i+1]} \oplus \bot N_{[0, i+1]}[1] \sur N_{i+1}[i+1] \oplus \bot N_{[0, i+1]}[1] \\
        \bot N_{[0, i]} & \inj \bot N_{[0, i+1]} \oplus \bot N_{[0, i+1]}[1] \sur N_{i+1}[i+1] \oplus \bot N_{[0, i+1]}[1].
    \end{align*}
    Using these sequences, one can easily verify that every triangle in the above diagram involving these arrows bound $2$-simplices. The square at the right end of the diagram gains a diagonal arrow and two $2$-simplices in the usual natural way for squares of this kind. Hence, this diagram creates a homotopy between the loops associated with the rows, and hence their corresponding elements of $K_1 qC\N$ are equal.

    Now we return to the element $(**)$ of $K_1 qB^b\N$ and we apply the following isomorphisms:
    \begin{equation*}
        \begin{tikzcd}
            K_1 qB^b\N \arrow[r, "(3)"]
            & K_1[qB\N \xrightarrow[]{\bot} qC\N] \arrow[r, "(4)"]
            & K_1^{\mathrm{Gr}}(\N)
        \end{tikzcd}
    \end{equation*}
    Recalling that $(3)$ is induced by the identity on $K_1 qB\N$ and $(4)$ is induced by $\partial: K_1 qB\N \to K_0 iB^q\N$, we see that $(**)$ is sent to $[HN] - [H \Delta \bot N]$ in $K_1^{\mathrm{Gr}} \N$, since $\partial$ sends the element of $K_1 qB\N$ corresponding to a loop
    \begin{equation*}
        \begin{tikzcd}
            \left( \substack{0 \\ 0} \right) \arrow[r, tail]
            & \left( \substack{A_0 \\ B_0} \right) \arrow[r, tail]
            & \cdots \arrow[r, tail]
            & \left( \substack{A_n \\ B_n} \right)
            & \left( \substack{0 \\ 0} \right) \arrow[l, "\simeq"']
        \end{tikzcd}
    \end{equation*}
    to the connected component of the point $\left( \substack{A_n \\ B_n} \right)$ in $K_0 iB^q\N$. It remains to prove that $[HN]-[H \Delta \bot N]$ is equal to $[N]-[\Delta \bot N]$ in $K_1^{\mathrm{Gr}} \N$. To do this, we have the following short exact sequences
    \begin{align*}
        N \inj HN \sur & \Delta \bot N[1] \\
        \Delta \bot N \inj H \Delta \bot N \sur & \Delta \bot N [1]
    \end{align*}
    which verifies the equality and completes the proof.
\end{proof}

\section{Compatability with work by Nenashev}

In \cite{Nen98}, Nenashev introduced $K_1^{\mathrm{Nen}}\N$, a presentation of $K_1$ of an exact category $\N$ generated by isomorphism classes of binary short exact sequences
\begin{equation*}
    \begin{tikzcd}
        A \arrow[r, shift left, rightarrowtail] \arrow[r, shift right, rightarrowtail]
        & B \arrow[r, shift left, twoheadrightarrow] \arrow[r, shift right, twoheadrightarrow]
        & C
    \end{tikzcd}
\end{equation*}
in $\M$ with a relation $[D] = 0$ whenever $D$ is diagonal (namely, its top differential is equal to its bottom differential). Given a diagram
\begin{equation*}
    \begin{tikzcd}
        A' \arrow[r, shift left, rightarrowtail] \arrow[r, shift right, rightarrowtail] \arrow[d, shift left, rightarrowtail] \arrow[d, shift right, rightarrowtail]
        & A \arrow[r, shift left, twoheadrightarrow] \arrow[r, shift right, twoheadrightarrow] \arrow[d, shift left, rightarrowtail] \arrow[d, shift right, rightarrowtail]
        & A'' \arrow[d, shift left, rightarrowtail] \arrow[d, shift right, rightarrowtail]\\
        B' \arrow[r, shift left, rightarrowtail] \arrow[r, shift right, rightarrowtail] \arrow[d, shift left, twoheadrightarrow] \arrow[d, shift right, twoheadrightarrow]
        & B \arrow[r, shift left, twoheadrightarrow] \arrow[r, shift right, twoheadrightarrow] \arrow[d, shift left, twoheadrightarrow] \arrow[d, shift right, twoheadrightarrow]
        & B'' \arrow[d, shift left, twoheadrightarrow] \arrow[d, shift right, twoheadrightarrow]\\
        C' \arrow[r, shift left, rightarrowtail] \arrow[r, shift right, rightarrowtail]
        & C \arrow[r, shift left, twoheadrightarrow] \arrow[r, shift right, twoheadrightarrow]
        & C''
    \end{tikzcd}
\end{equation*}
in $\M$ where the ``top" arrows horizontally commute with the ``top" arrows vertically (but not necessarily the ``bottom arrows vertically) and vice-versa for ``bottom" arrows, there is a relation
\[ [r_1]-[r_2]+[r_3]=[c_1]-[c_2]+[c_3] \]
where $r_i$ is the $i$th row and $c_j$ is the $j$th column. In \cite{Nen98}, Nenashev provided an isomorphism
\[ K_1^{\mathrm{Nen}}\N \xrightarrow[]{\cong} K_1 \N \]
which sends the element $\left[ \begin{tikzcd} A \arrow[r, shift left, rightarrowtail, "i"] \arrow[r, shift right, rightarrowtail, "j"'] & B \arrow[r, shift left, twoheadrightarrow, "p"] \arrow[r, shift right, twoheadrightarrow, "q"'] & C \end{tikzcd} \right]$ to the element in $K_1 \N$ corresponding to the loop
\begin{equation*}
    \begin{tikzcd}
        \left( \substack{0 \\ 0} \right) \arrow[r, rightarrowtail]
        & \left( \substack{A \\ A} \right) \arrow[r, rightarrowtail, "(i;j)"]
        & \left( \substack{B \\ B} \right)
        & \left( \substack{0 \\ 0} \right). \arrow[l, rightarrowtail]
    \end{tikzcd}
\end{equation*}
where the notation $\begin{tikzcd} \left( \substack{A \\ A} \right) \arrow[r, "(i;j)"] & \left( \substack{B \\ B} \right) \end{tikzcd}$ represents the arrow
\begin{equation*}
    \begin{tikzcd}[row sep=0em]
        A \arrow[r, rightarrowtail, "i"]
        & B \arrow[dr, twoheadrightarrow, "p"] & \\
        & & C \\
        A \arrow[r, rightarrowtail, "j"]
        & B \arrow[ur, twoheadrightarrow, "q"']
    \end{tikzcd}
\end{equation*}

We will now show that this isomorphism is compatible with the isomorphism given by Grayson in \cite{Gra12}
\[ K_1^{\mathrm{Gr}}\N \xrightarrow[]{\cong} K_1 \N \]
in other words:

\begin{lemma}\label{lem:NenashevCompatible}
    The following square commutes:
    \begin{equation*}
        \begin{tikzcd}
            K_1^{\mathrm{Nen}}\N \arrow[r, "\cong"] \arrow[d, "\cong"]
            & K_1 \N \arrow[d, "\cong"]\\
            K_1^{\mathrm{Gr}}\N \arrow[r, "\cong"]
            & K_1 qC\N
        \end{tikzcd}
    \end{equation*}
\end{lemma}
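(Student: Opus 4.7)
The plan is to verify commutativity on the generators of $K_1^{\mathrm{Nen}}\N$, namely classes $[\sigma]$ of binary short exact sequences $\sigma = (A \overset{i,j}{\rightrightarrows} B \overset{p,q}{\rightrightarrows} C)$. By \cite{KKW18}, the left vertical isomorphism sends $[\sigma]$ to the class $[N_\sigma] \in K_1^{\mathrm{Gr}}\N$ of the associated three-term binary acyclic complex $N_\sigma$ (in degrees $2, 1, 0$) with top differentials $i, p$ and bottom differentials $j, q$. Since the diagonal relation forces $[\Delta \bot N_\sigma] = 0$ in $K_1^{\mathrm{Gr}}\N$, we have $[N_\sigma] - [\Delta \bot N_\sigma] = [N_\sigma]$, so \Cref{prop:K1Elements} applies directly and sends $[N_\sigma]$ to the length-four loop
\[
L_G(N_\sigma) : \left(\substack{0\\0}\right) \rightarrowtail \left(\substack{C\\C}\right) \rightarrowtail \left(\substack{B \to C\\B \to C}\right) \rightarrowtail \left(\substack{N_\sigma\\N_\sigma}\right) \xleftarrow{\simeq} \left(\substack{0\\0}\right)
\]
in $K_1 qC\N$ coming from the naive filtration of $N_\sigma$. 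On the Nenashev side, $[\sigma]$ is sent to the length-two loop
\[
L_N(\sigma) : \left(\substack{0\\0}\right) \rightarrowtail \left(\substack{A\\A}\right) \overset{(i;j)}{\rightarrowtail} \left(\substack{B\\B}\right) \xleftarrow{\simeq} \left(\substack{0\\0}\right),
\]
with the middle $1$-simplex carrying the cokernel datum $C$ with surjections $p, q$ and all $0$-simplices viewed as complexes concentrated in degree $0$ via $\N \hookrightarrow qC\N$. The task is thus to show $L_G(N_\sigma) = L_N(\sigma)$ in $K_1 qC\N$.

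To build the required identification, the plan is to construct a two-row diagram in $KqC\N$ in the style used at the end of the proof of \Cref{prop:K1Elements}: the two rows are the loops $L_N(\sigma)$ and $L_G(N_\sigma)$, the vertical arrows realise the chosen identifications of the underlying $0$-simplices, and each triangle or square is filled by a $2$-simplex arising either from face and degeneracy maps of the bisimplicial set (as in \Cref{rem:ComSquare}) or from a short exact sequence of bounded (binary) complexes. The crucial short exact sequences of complexes are $(A \overset{i}{\to} B) \inj N_\sigma \sur C[0]$ and its bottom-differential analogue $(A \overset{j}{\to} B) \inj N_\sigma \sur C[0]$; these allow us to telescope the three admissible-mono $1$-simplices of $L_G(N_\sigma)$ onto the single binary $1$-simplex $(i;j)$ of $L_N(\sigma)$, with the residual pieces bounded by $2$-simplices of the types already enumerated in \Cref{rem:ComSquare}.

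The main obstacle is the bookkeeping: the diagram must be simultaneously compatible with both the top and the bottom differentials of $N_\sigma$ and with every step of the naive filtration, so that the claimed $2$-simplices genuinely exist in the bisimplicial set $N_\bullet i G_\bullet qC\N$. The organising tool is \Cref{lem:RelativeAdditivity} (and the additivity theorem it relies on), which guarantees that the class in $KqC\N$ of a filtered complex decomposes as the sum of the classes of its subquotients; any residual contributions arising from diagonal binary complexes that appear vanish by Grayson's diagonal relation. Once the diagram is set up, the remaining verifications parallel those already carried out in \Cref{prop:K1Elements} and are routine.
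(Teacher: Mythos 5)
Your overall strategy matches the paper's at the top level: both reduce to generators $[\sigma]$, apply \Cref{prop:K1Elements} to compute the image of $[N_\sigma]$ as the loop $L_G(N_\sigma)$ in $K_1 qC\N$ (using, as you correctly note, that $[\Delta\bot N_\sigma]=0$), and then aim to match this against the image of the Nenashev loop. But the paper and your proposal diverge at the matching step, and the divergence is where the work actually lies. The paper does \emph{not} try to directly homotope $L_G(N_\sigma)$ onto the degree-zero image of the Nenashev loop inside $K_1 qC\N$. Instead, after first replacing the quasi-isomorphism $(\substack{\top N_\sigma \\ \bot N_\sigma}) \xleftarrow{\simeq} (\substack{0\\0})$ with a composite of admissible monos through $(\substack{A\overset{1}{\to}A\to 0 \\ A\overset{1}{\to}A\to 0})$ (exactly the homotopy of \Cref{rem:ComSquare}), it applies the Euler-characteristic map $\chi: KqC\N \to K\N$. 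This is the key simplification: under $\chi$, every complex collapses to a single object, so the three na\"ive-filtration $1$-simplices become genuinely diagonal and contract, and the remaining piece decomposes transparently into a diagonal loop (which is zero) and a length-two loop that visibly recovers the Nenashev loop up to the sign conventions for reversing direction and swapping filtrations.

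Your plan omits the passage through $\chi$ and tries to produce a homotopy between $L_G(N_\sigma)$ (a loop with three monos and one quasi-isomorphism, whose intermediate $0$-simplices are honest two- and three-term complexes) and $L_N(\sigma)$ (a loop with three monos, all concentrated in degree $0$) directly in $K_1 qC\N$. This is where there is a genuine gap. The short exact sequences $(A\to B)\inj N_\sigma \sur C[0]$ give additivity relations in $K_0$, but the ``two-row diagram'' template from the end of the proof of \Cref{prop:K1Elements} requires the two rows to have matching shape so that each square and triangle is bounded by a $2$-simplex; here the rows have different lengths and $0$-simplices of genuinely different types, and you have not exhibited a filling. \Cref{lem:RelativeAdditivity} is about homotopy fibers of extension categories and does not obviously produce the required $2$-simplices; and ``Grayson's diagonal relation'' is a relation in $K_1^{\mathrm{Gr}}\N$, whereas what you actually need in $K_1 qC\N$ is the null-homotopy of diagonal loops coming from the involution swapping top and bottom filtrations (the generalisation of \cite[Theorem~3.1]{GG87} that the paper cites). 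Without $\chi$, the na\"ive-filtration $1$-simplices of $L_G(N_\sigma)$ do \emph{not} become literally diagonal (the top and bottom filtrations land in genuinely different complexes $(B\overset{p}{\to}C)$ versus $(B\overset{q}{\to}C)$, even though the subquotients agree), so the diagonal-contraction step cannot be invoked as stated. Finally, a small but real error: the closing arrow of $L_N(\sigma)$ is an admissible mono $(\substack{0\\0}) \rightarrowtail (\substack{B\\B})$ with cokernel $B$, not a quasi-isomorphism; writing it as $\xleftarrow{\simeq}$ would be false once $B \neq 0$, and conflating the two types of $1$-simplices obscures exactly the bookkeeping you would need to carry out.
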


The left vertical isomorphism in the above lemma is induced by the inclusion $B^q_{[0, 2]} \N \hookrightarrow B^q\N$, shown to be an isomorphism in \cite{KKW18}, the right vertical isomorphism is induced by the inclusion $\N \hookrightarrow C\N$ which considers an object as a complex concentrated in degree $0$, and the bottom horizontal arrow sends an element of $K_1^{\mathrm{Gr}} \N$ corresponding to the object $(N, d, d')$ to the element of $K_1 qC\N$ corresponding to the loop
\begin{equation*}
    \left[
    \begin{tikzcd}
        \left( \substack{0 \\ 0} \right) \arrow[r, rightarrowtail]
        & \left( \substack{\top N_{[0, 0]} \\ \bot N_{[0, 0]}} \right) \arrow[r, rightarrowtail]
        & \cdots \arrow[r, rightarrowtail]
        & \left( \substack{\top N \\ \bot N} \right)
        & \left( \substack{0 \\ 0} \right) \arrow[l, "\simeq"']
    \end{tikzcd}
    \right]
\end{equation*}
where the arrows labelled by ``$\inj$" are induced by the na\"ive filtration of $N$, and the arrow labelled by ``$\xleftarrow[]{\simeq}$" is given by the fact $N$ is acyclic. Recall that the bottom horizontal arrow is the one considered in \Cref{prop:K1Elements} and hence the composition $K_1^{\mathrm{Gr}}\N \xrightarrow[]{\cong} K_1 qC\N \xleftarrow[]{\cong} K_1 \N$ is equal to Grayson's isomorphism \cite{Gra12}.

\begin{proof}
    Suppose we have an element of $K_1^{\mathrm{Nen}} \N$ corresponding to the binary short exact sequence
    \begin{equation*}
        \begin{tikzcd}
            A \arrow[r, shift left, rightarrowtail, "i"] \arrow[r, shift right, rightarrowtail, "j"']
            & B \arrow[r, shift left, twoheadrightarrow, "p"] \arrow[r, shift right, twoheadrightarrow, "q"']
            & C
        \end{tikzcd}
    \end{equation*}
    As described above, this is sent to the element of $K_1 \N$ corresponding to the loop
    \begin{equation*}
        \begin{tikzcd}
            \left( \substack{0 \\ 0} \right) \arrow[r, rightarrowtail]
            & \left( \substack{A \\ A} \right) \arrow[r, rightarrowtail, "(i;j)"]
            & \left( \substack{B \\ B} \right)
            & \left( \substack{0 \\ 0} \right) \arrow[l, rightarrowtail]
        \end{tikzcd}
    \end{equation*}
    We also see our element is sent via the composition
    \[ K_1^{\mathrm{Nen}} \N \xrightarrow[]{\cong} K_1^{\mathrm{Gr}} \N \xrightarrow[]{\cong} K_1 qC\N \]
    to the element of $K_1 qC\N$ corresponding to the loop
    \begin{equation*}
        \begin{tikzcd}
            \left( \substack{0 \\ 0} \right) \arrow[r, rightarrowtail]
            & \left( \substack{C \\ C} \right) \arrow[r, rightarrowtail]
            & \left( \substack{B \xrightarrow[]{p} C \\ B \xrightarrow[q]{} C} \right) \arrow[r, rightarrowtail]
            & \left( \substack{A \xrightarrow[]{i} B \xrightarrow[]{p} C \\ A \xrightarrow[j]{} B \xrightarrow[q]{} C} \right)
            & \left( \substack{0 \\ 0} \right) \arrow[l, "\simeq"']
        \end{tikzcd}
    \end{equation*}
    Observe that we have the following diagram in $KqC\N$:
    \begin{equation*}
        \begin{tikzcd}[row sep=huge, column sep=huge]
            \left( \substack{A \xrightarrow[]{i} B \xrightarrow[]{p} C \\ A \xrightarrow[j]{} B \xrightarrow[q]{} C} \right)
            & & \left( \substack{0 \\ 0} \right) \arrow[ll, bend right, "\simeq"'] \arrow[dl, bend right, "\simeq"'] \arrow[dl, bend left, rightarrowtail] \\
            & \left( \substack{A \xrightarrow[]{1} A \xrightarrow[]{} 0 \\ A \xrightarrow[1]{} A \xrightarrow[]{} 0} \right) \arrow[ul, bend right, "\simeq"'] \arrow[ul, bend left, rightarrowtail]
        \end{tikzcd}
    \end{equation*}
    where every space in the diagram can be filled in by $2$-simplices, as explained in \Cref{rem:ComSquare}. This diagram provides a homotopy between the path
    \[ \left( \substack{A \xrightarrow[]{i} B \xrightarrow[]{p} C \\ A \xrightarrow[j]{} B \xrightarrow[q]{} C} \right) \xleftarrow[]{\simeq} \left(  \substack{0 \\ 0} \right) \]
    and the path
    \[ \left( \substack{A \xrightarrow[]{i} B \xrightarrow[]{p} C \\ A \xrightarrow[j]{} B \xrightarrow[q]{} C} \right) \leftarrowtail \left( \substack{A \xrightarrow[]{1} A \xrightarrow[]{} 0 \\ A \xrightarrow[1]{} A \xrightarrow[]{} 0} \right) \leftarrowtail \left( \substack{0 \\ 0} \right). \]
    Thus, our element in $K_1 qC\N$ is equal to the element associated to the loop
    \begin{equation*}
        \begin{tikzcd}[column sep=small]
            \left( \substack{0 \\ 0} \right) \arrow[r, rightarrowtail]
            & \left( \substack{C \\ C} \right) \arrow[r, rightarrowtail]
            & \left( \substack{B \xrightarrow[]{p} C \\ B \xrightarrow[q]{} C} \right) \arrow[r, rightarrowtail]
            & \left( \substack{A \xrightarrow[]{i} B \xrightarrow[]{p} C \\ A \xrightarrow[j]{} B \xrightarrow[q]{} C} \right)
            & \left( \substack{A \xrightarrow[]{1} A \xrightarrow[]{} 0 \\ A \xrightarrow[1]{} A \xrightarrow[]{} 0} \right) \arrow[l, rightarrowtail]
            & \left( \substack{0 \\ 0} \right) \arrow[l, rightarrowtail]
        \end{tikzcd}
    \end{equation*}
    The isomorphism
    \[ K_1 qC\N \to K_1 \N \]
    is induced by the Euler characteristic $\chi: KqC\N \to K\N$. To give an idea of how Euler characteristic works for this representation of the $K$-theory space, below is how Euler characteristic affects $0$-simplices:
    \begin{equation*}
        \left( \substack{\cdots \to M_2 \to M_1 \to M_0 \\ \cdots \to N_2 \to N_1 \to N_0} \right) \mapsto \left( \substack{\cdots \oplus M_2 \oplus N_1 \oplus M_0 \\ \cdots \oplus N_2 \oplus M_1 \oplus N_0} \right).
    \end{equation*}
    We then see that the element of $K_1 \N$ that our loop is sent to is given by the following loop:
    \begin{equation*}
        \begin{tikzcd}
            \left( \substack{0 \\ 0} \right) \arrow[r, rightarrowtail]
            & \left( \substack{C \\ C} \right) \arrow[r, rightarrowtail]
            & \left( \substack{B \oplus C \\ B \oplus C} \right) \arrow[r, rightarrowtail]
            & \left( \substack{A \oplus B \oplus C \\ A \oplus B \oplus C} \right)
            & \left( \substack{A \oplus A \\ A \oplus A} \right) \arrow[l, rightarrowtail]
            & \left( \substack{0 \\ 0} \right) \arrow[l, rightarrowtail]
        \end{tikzcd}
    \end{equation*}
    the first three arrows are diagonal in the sense that both filtrations associated with the $1$-simplices are the same, and so they can be contracted. It is fairly easy to see that the remaining loop is the sum of the following two loops:
    \begin{equation*}
        \begin{tikzcd}
            \left( \substack{0 \\ 0} \right) \arrow[r, rightarrowtail]
            & \left( \substack{A \oplus C \\ A \oplus C} \right)
            & \left( \substack{A \\ A} \right) \arrow[l, rightarrowtail]
            & \left( \substack{0 \\ 0} \right) \arrow[l, rightarrowtail] \\
            \left( \substack{0 \\ 0} \right) \arrow[r, rightarrowtail]
            & \left( \substack{B \\ B} \right)
            & \left( \substack{A \\ A} \right) \arrow[l, rightarrowtail, "\left( \substack{j \\ i} \right)"']
            & \left( \substack{0 \\ 0} \right) \arrow[l, rightarrowtail]
        \end{tikzcd}
    \end{equation*}
    the first of which is obviously equal to $0$ as the middle arrow is diagonal in the sense mentioned above, and the remaining loop is equal to the following loop, as reversing the direction of a loop and exchanging the ``top" and ``bottom" filtrations of the simplices both induce minus signs in the element.
\end{proof}

\section{Higher Relative \texorpdfstring{$K$}{}-groups}

We will now give meaning to a $n$-dimensional multicomplex in $\M$. Firstly, a $1$-dimensional multicomplex is an object of $C\M$, namely, a chain complex in $\M$. An $(n+1)$-dimensional multicomplex is a chain complex in the category $C^n \M$ of $n$-dimensional multicomplexes (i.e. $C^{n+1}\M := C(C^n\M)$. We similarly have the categories $B^n\M$, $(C^q)^n \M$ and $(B^q)^n\M$ of $n$-dimensional binary multicompexes and $n$-dimensional acyclic (binary) multicomplexes.

\begin{definition}
    Given a category $\A$, an object $[f]$ of the arrow category $\A^{\to}$ together with a choice of a map $g$ such that $g \circ f = 1$ is called \emph{split}. We will now give meaning to the category $Q^n \A$ of so-called \emph{split $n$-cubes in $\A$}. Firstly, $Q^0 \A := \A$, i.e. a split $0$-cube in $\A$ is an object of $\A$. A split $(n+1)$-cube in $\A$, namely an object of $Q^{n+1}\A$, is a split object $[A' \to A]$ in the arrow category $(Q^n\A)^{\to}$ of split $n$-cubes in $\A$. A morphism in $Q^{n+1}\A$ is a map $[A' \to A] \to [B' \to B]$ which commutes with the chosen splittings.
    
    Given a functor $G: \A \to \catname{Ab}$ from $\A$ to the category of abelian groups, we can recursively extend $G$ to a functor $G_{n+1}: Q^{n+1} \A \to \catname{Ab}$ from the category of split $(n+1)$-cubes to $\catname{Ab}$. Given a split $(n+1)$-cube $[A' \to A]$ in $Q^{n+1}\A$, we define
    \[ G_{n+1}[A' \to A] := \mathrm{coker}(G_nA' \to G_nA), \]
    or equivalently
    \[ G_{n+1}[A' \to A] = \mathrm{ker}(G_n A \to G_n A') \]
    where the map $G_n A \to G_n A'$ is induced by the choice of splitting of the map $A' \to A$.
\end{definition}

The categories of $n$-dimensional multicomplexes, together with diagonal functors $\Delta$ provide a key example of a split $n$-cube in the category of exact categories, which we will denote by $\Omega^n \M$. Namely, we take as vertices the categories $D_1 D_2 \cdots D_n \M$, where $D_i \in \{ C^q, B^q \}$ for each $i=1, ..., n$, and every morphism is the relevant functor $\Delta$. We take as splittings the corresponding functor $\bot$. This is an arbitrary choice, as we could have instead taken $\top$ as a splitting. As an example, the following is a diagram representing this split $n$-cube in the case of $n=2$:
\begin{equation*}
    \begin{tikzcd}
        (C^q)^2 \M \arrow[r, "\Delta"] \arrow[d, "\Delta"]
        & C^q B^q \M \arrow[d, "\Delta"'] \arrow[l, bend right, "\bot"', dashed] \\
        B^q C^q \M \arrow[r, "\Delta"] \arrow[u, bend left, "\bot", dashed]
        & (B^q)^2 \M \arrow[u, bend right, "\bot"', dashed] \arrow[l, bend left, "\bot", dashed]
    \end{tikzcd}
\end{equation*}
Using the construction discussed above with the functor $G=K_0$ from the category of exact categories to the category of Abelian groups applied to the split $n$-cube of exact categories we have just introduced we obtain the group we will denote by $K_n^{\mathrm{Gr}}(\M)$ introduced in \cite{Gra12}. This group can be generated by isomorphism classes of objects in the category $(B^q)^n\M$ subject to a relation $[M] = [M']+[M'']$ whenever there is a short exact sequence
\[ M' \inj M \sur M'' \]
and a relation $[D] = 0$ whenever $D$ is a diagonal object of $(B^q)^n\M$, namely, whenever $D$ is in the image of any of the above functors labelled by $\Delta$.

Now once again recall our setup of an exact functor $F: \M \to \N$ between exact categories, and consider the object $[F: \M \to \N] =: [F]$ in the arrow category of the category of exact categories associated to $F$. For $n \geq 0$, we denote $\Omega^n [F]$ by the split $n$-cube in the arrow category of the category of exact categories whose vertices are the arrows $[D_1 D_2 \cdots D_n \M \xrightarrow[]{F} D_1 D_2 \cdots D_n \N]$ with $D_i \in \{ C^q, B^q \}$ for $i=1, ..., n$, with morphisms given by $\Delta$ and splittings $\bot$. As before in the absolute case, this is an arbitrary choice where we could have chosen $\top$ instead. As this definition is fairly involved we will give two examples to make it clearer. Firstly, for $n=1$, we will give the following diagram to represent $\Omega^1 [F]$:
\begin{equation*}
    \begin{tikzcd}
        \left[ C^q \M \xrightarrow[]{F} C^q \N \right] \arrow[d, shift right=8, "\Delta"] \arrow[d, shift left=8, "\Delta"'] \\
        \left[ B^q\M \xrightarrow[]{F} B^q \N \right] \arrow[u, bend left, shift left=9, dashed, "\bot"] \arrow[u, bend right, shift right=9, dashed, "\bot"']
    \end{tikzcd}
\end{equation*}
And now for $n=2$, we have the following diagram to represent $\Omega^2[F]$:
\begin{equation*}
    \begin{tikzcd}
        \big[ (C^q)^2 \M \arrow[rr, "F"] \arrow[dr, "\Delta"] \arrow[dd, "\Delta"]
        & & (C^q)^2 \N \big] \arrow[dd, "\Delta" near end] \arrow[dr, "\Delta"] & \\
        & \big[ C^q B^q \M \arrow[rr, crossing over, "F" near start]
        & & C^q B^q \N \big] \arrow[dd, "\Delta"] \\
        \big[ B^q C^q \M \arrow[rr, "F" near end] \arrow[dr, "\Delta"]
        & & B^q C^q \N \big] \arrow[dr, "\Delta"] & \\
        & \big[ (B^q)^2 \M \arrow[rr, "F"] \arrow[from=uu, crossing over, "\Delta" near end]
        & & (B^q)^2 \N \big]
    \end{tikzcd}
\end{equation*}

Considering $\KOmega[F]$, introduced at the end of section $1$, as a functor from $\catname{Exact}^{\to}$ to $\catname{Ab}$, we again recursively extend $K_0 \Omega$ to a functor $Q^n \catname{Exact}^{\to} \to \catname{Ab}$. Applying this functor to the object $\Omega^n[F]$ of $Q^n \catname{Exact}^{\to}$, we obtain the group $\KnOmega{n}{F}$.

This definition is a mouthful, so we will slowly unwrap it for $n=1$. First, we apply $\KOmega[-]$ to the vertices of the split $1$-cube $\Omega^1[F]$. Then, $\KnOmega{1}{F}$ is the cokernel of the induced homomorphism between the two vertices described below:
\begin{equation*}
    \begin{tikzcd}
        \KOmega \left[ C^q \M \xrightarrow[]{F} C^q \N \right] \arrow[d, "\KOmega (\Delta)"] \\
        \KOmega \left[ B^q \M \xrightarrow[]{F} B^q \N \right]
    \end{tikzcd}
\end{equation*}
More explicitly, since $\KOmega[-]$ is also defined using cokernels, we obtain $\KnOmega{1}{F}$ by taking cokernels in both directions of the following commutative diagram:
\begin{equation*}
    \begin{tikzcd}
        K_0 p C \left[ C^q \M \xrightarrow[]{F} C^q \N \right] \arrow[r, "K_0(\Delta)"] \arrow[d, "K_0(\Delta)"]
        & K_0 p B \left[ C^q \M \xrightarrow[]{F} C^q \N \right] \arrow[d, "K_0(\Delta)"] \\
        K_0 p C \left[ B^q \M \xrightarrow[]{F} B^q \N \right] \arrow[r, "K_0(\Delta)"]
        & K_0 p B \left[ B^q \M \xrightarrow[]{F} B^q \N \right]
    \end{tikzcd}
\end{equation*}
Since in all the above arrows $\Delta$ is split by $\bot$, we obtain the same group by taking kernels in both directions of the following commutative diagram:
\begin{equation*}
    \begin{tikzcd}
        K_0 p C \left[ C^q \M \xrightarrow[]{F} C^q \N \right] \arrow[from=r, "K_0(\bot)"'] \arrow[from=d, "K_0(\bot)"']
        & K_0 p B \left[ C^q \M \xrightarrow[]{F} C^q \N \right] \arrow[from=d, "K_0(\bot)"'] \\
        K_0 p C \left[ B^q \M \xrightarrow[]{F} B^q \N \right] \arrow[from=r, "K_0(\bot)"']
        & K_0 p B \left[ B^q \M \xrightarrow[]{F} B^q \N \right]
    \end{tikzcd}
\end{equation*}

\begin{remark}
    Following the above process for higher $n$, we see that the group $\KnOmega{n}{F}$ can be presented in the following way: There is a generator for every isomorphism class of the category $B \left[ (B^q)^n \M \xrightarrow[]{F} (B^q)^n \N \right]$, with the following relations:
    \begin{itemize}
        \item $[Y] = [X] + [Z]$ whenever there is a short exact sequence $X \inj Y \sur Z$
        \item $[D]=0$ for any diagonal object $D$, where diagonal object takes the obvious meaning (after recalling that the functor $\Delta$ can take two meanings, one arising from $C[-] \to B[-]$ and the other from $C^q \to B^q$)
        \item $[X]=[Y]$ whenever there a weak equivalence $X \xrightarrow[]{\simeq} Y$ in the category $pB\left[ (B^q)^n \M \xrightarrow[]{F} (B^q)^n \N \right]$
    \end{itemize}
\end{remark}

Following \cite[Corollary~2.3]{Gra16}, we have a long exact sequence
\begin{equation*}
    \begin{tikzcd}
        K_{n+1}^{\mathrm{Gr}}\M \arrow[r]
        & K_{n+1}^{\mathrm{Gr}}\N \arrow[r]
        & \KnOmega{n}{F} \arrow[r]
        & K_n^{\mathrm{Gr}}\M \arrow[r]
        & K_n^{\mathrm{Gr}}\N
    \end{tikzcd}
\end{equation*}

\begin{theorem}\label{thm:MainResult}
    We have an isomorphism
    \[ \KnOmega{n}{F} \cong K_n[F] \]
    for all $n \geq 0$
\end{theorem}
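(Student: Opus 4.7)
The plan is to proceed by induction on $n$, with the base case $n=0$ being \Cref{thm:RelativeK0}. The strategy for the inductive step mirrors that proof exactly: I will construct a comparison homomorphism $\Phi_n \colon \KnOmega{n}{F} \to K_n[F]$ and then apply the $5$-lemma to a ladder of long exact sequences.

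To construct $\Phi_n$, I would use the generators-and-relations presentation of $\KnOmega{n}{F}$ recorded in the remark preceding this theorem: a generator is an object $(M,N,u)$ of $B\left[ (B^q)^n \M \xrightarrow{F} (B^q)^n \N \right]$, i.e.\ a triple with $M \in C((B^q)^n\M)^2$, $N \in B((B^q)^n\N)$ and $u \colon FM \to \both N$ a quasi-isomorphism. Such a triple packages together exactly enough na\"ive-filtration and quasi-isomorphism data to produce, by iterating the $n=0$ construction once per multicomplex dimension, a point in $\Omega^n K[F]$, and I would set $\Phi_n[M,N,u]$ to be the class of that point in $\pi_n K[F] = K_n[F]$. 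Well-definedness -- compatibility with short exact sequences, the several diagonal relations (one per internal direction and one external), and weak equivalences in $pB\left[(B^q)^n \M \xrightarrow{F} (B^q)^n \N\right]$ -- would be verified by the same three-step argument used in \Cref{thm:RelativeK0}: relative additivity (\Cref{lem:RelativeAdditivity}) for short exact sequences, the top/bottom-swap involution together with the fact that the homotopy fibre of an $H$-map of $H$-spaces is itself an $H$-space for the diagonal relations, and explicit higher-simplex path constructions for the weak equivalences.

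Once $\Phi_n$ is in hand, I would insert it as the middle vertical arrow of the ladder
\begin{equation*}
\begin{tikzcd}[column sep=small]
K_{n+1}^{\mathrm{Gr}} \M \arrow[r] \arrow[d, "\cong"]
& K_{n+1}^{\mathrm{Gr}} \N \arrow[r] \arrow[d, "\cong"]
& \KnOmega{n}{F} \arrow[r] \arrow[d, "\Phi_n"]
& K_n^{\mathrm{Gr}} \M \arrow[r] \arrow[d, "\cong"]
& K_n^{\mathrm{Gr}} \N \arrow[d, "\cong"]\\
K_{n+1} \M \arrow[r]
& K_{n+1} \N \arrow[r]
& K_n[F] \arrow[r]
& K_n \M \arrow[r]
& K_n \N
\end{tikzcd}
\end{equation*}
whose top row is Grayson's long exact sequence from \cite[Corollary~2.3]{Gra16} and whose bottom row is the homotopy-fibre long exact sequence associated to $K[F] \to K\M \to K\N$. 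The four outer vertical isomorphisms are Grayson's absolute isomorphism $K_n^{\mathrm{Gr}} \cong K_n$ established in \cite{Gra12}, and the first and fourth squares commute by functoriality of $F$, as in \Cref{thm:RelativeK0}. The $5$-lemma then delivers the theorem.

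The hard part will be the commutativity of the two middle squares, both of which involve the connecting homomorphisms of the respective long exact sequences. The third square is handled in the same spirit as the corresponding square in \Cref{thm:RelativeK0}: both compositions send a generator $(M,N,u)$ to $\chi(\top M) - \chi(\bot M) \in K_n^{\mathrm{Gr}}\M$, identified with $K_n\M$. The second square is the genuine obstacle: it demands a higher-dimensional analogue of \Cref{prop:K1Elements}, computing the image under Grayson's absolute isomorphism $K_{n+1}^{\mathrm{Gr}}\N \to K_{n+1}\N$ of a generator $[N]$ with $N \in (B^q)^{n+1}\N$ and showing that the result maps via the topological connecting map to the same element of $K_n[F]$ as $\Phi_n$ applied to the image of $[N]$ under Grayson's relative connecting map. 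I would prove this by iterating the cone-functor construction of \Cref{prop:K1Elements}: apply the functor $H$ once in each of the $n+1$ multicomplex directions in turn and chase the resulting diagram to reduce both sides to the same canonical nested na\"ive-filtration simplex. Once this square commutes, the $5$-lemma closes the induction.
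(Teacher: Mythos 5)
Your proposal takes a genuinely different route from the paper's, and it contains a real gap at exactly the step you flag yourself as ``the genuine obstacle.'' The paper does \emph{not} construct an explicit $\Phi_n$ nor run a $5$-lemma argument for $n>0$. Instead, it leverages the fact that \Cref{thm:RelativeK0} exhibits a natural isomorphism of \emph{functors} $\KOmega[-]\cong K_0[-]$ on $\catname{Exact}^{\to}$. Since $\KnOmega{n}{F}$ is by construction an iterated kernel over the $n$-cube with vertices $\KOmega[D_1\cdots D_n F]$ and edges induced by $\bot$, and since each of those edges is split, the iterated kernel coincides with $\pi_0$ of the iterated homotopy fiber $X$ of the $(n+1)$-cube on the spaces $K(D_1\cdots D_n\M)$ and $K(D_1\cdots D_n\N)$, with edges induced by $\bot$ and $F$. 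Commuting the order of homotopy fibers and invoking Grayson's absolute equivalence $K\Omega^n\M\simeq\Omega^n K\M$ from \cite{Gra12} yields
\[
\KnOmega{n}{F}\cong\pi_0 X\cong\pi_0\,\mathrm{hofib}(K\Omega^n\M\to K\Omega^n\N)\cong\pi_0\,\mathrm{hofib}(\Omega^n K\M\to\Omega^n K\N)\cong\pi_0\,\Omega^n K[F]\cong K_n[F],
\]
proving all $n$ at once with no new simplex-level computations: the hard explicit work is all front-loaded into the $n=0$ case and Grayson's absolute theorem.

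The gap in your proposal is the commutativity of the second square of your ladder, which requires a higher-dimensional analogue of \Cref{prop:K1Elements} computing an explicit loop representative for the image of a generator $[N]$, $N\in(B^q)^{n+1}\N$, under the absolute isomorphism $K_{n+1}^{\mathrm{Gr}}\N\to K_{n+1}\N$. You sketch ``iterate the cone functor $H$ in each of the $n+1$ directions,'' but this is not carried out, and there is good reason to expect it to be genuinely hard rather than routine: even the $n=1$ case in \Cref{prop:K1Elements} required assembling and verifying a nontrivial diagram of $1$- and $2$-simplices and two auxiliary short exact sequences, and applying $H$ in $n+1$ independent directions of a binary multicomplex produces a combinatorial object whose identification with a nested na\"ive-filtration loop in $\Omega^{n+1}K\N$ would need a new argument, not a verbatim repetition. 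The paper never needs any explicit representative of the absolute connecting map, which is precisely what makes its argument short. A minor additional remark: you phrase the argument as an induction on $n$, but the inductive hypothesis is never invoked --- each ladder uses only the absolute isomorphisms of \cite{Gra12} in its outer columns, so the levels are logically independent and the induction framing does no work.
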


\begin{proof}
    In \Cref{thm:RelativeK0} we prove the case for $n=0$, so it remains to prove the statement for $n>0$. Recall that (an equivalent formulation of) the definition of $\KnOmega{n}{F}$ is an iterated kernel of the $n$-cube with vertices $\KOmega[D_1 D_2 \cdots D_n F]$ (with $D_1, ..., D_n \in \{ C^q, B^q \}$) and edges induced by $\bot$. Set the space $X$ as the iterated homotopy fiber of the $(n+1)$-cube with vertices $K (D_1 D_2 \cdots D_n \M)$ and $K( D_1 D_2 \cdots D_n \N)$ with edges given by the maps induced by $\bot$ and $F$ in the relevant locations. Then by a standard argument, $\KnOmega{n}{F} \cong \pi_0 X$ since $\KOmega[-] \cong K_0[-]$ as functors. Rearranging the order of taking homotopy fibers, we see that
    \[ \pi_0 X \cong \pi_0 \mathrm{hofib}(K \Omega^n\M \xrightarrow[]{F} K \Omega^n \N). \]
    Due to \cite{Gra12}, we can complete the proof with the following chain of isomorphisms:
    \begin{align*}
        \KnOmega{n}{F} & \cong \pi_0 X \\
        & \cong \pi_0 \mathrm{hofib}(K \Omega^n \M \xrightarrow[]{F} K \Omega^n \N) \\
        & \cong \pi_0 \mathrm{hofib}(\Omega^n K \M \xrightarrow[]{F} \Omega^n K \N) \\
        & \cong \pi_0 \Omega^n K[F] \\
        & \cong K_n[F].
    \end{align*}
\end{proof}

The conjecture \cite[Conjecture~1.6]{Gra16} claims that the following homotopy commutative square is a homotopy pullback square:
\begin{equation*}
    \begin{tikzcd}
        K[pB[F] \xrightarrow[]{\bot} pC[F]] \arrow[r] \arrow[d]
        & K[iB\N \xrightarrow[]{\bot} iC\N] \arrow[d, "{(\both, 1)}"] \\
        K[qC\M^2 \xrightarrow[]{\bot} qC\M] \arrow[r, "F"]
        & K[qC\N^2 \xrightarrow[]{\bot} qC\N]
    \end{tikzcd}
\end{equation*}
where the arrows are induced by the labelled functors, together with the ``forgetful" functors
\begin{align*}
    pB[F] \to qC\M^2, (M, N, u) & \mapsto M \\
    pB[F] \to iB\N, (M, N, u) & \mapsto N,
\end{align*}
and similarly for $C[F]$. Commutativity of this diagram is witnessed by the maps $u: FM \to \both N$ arising from triples $(M, N, u)$ in $B[F]$ (and $C[F]$). One might think that the above theorem proves this conjecture, but various things are missing for a full proof. Most importantly, there is currently no known proof that $K_n^{\mathrm{Gr}}[F] \cong K_n[pB[F] \xrightarrow[]{\bot} pC[F]]$ for $n > 0$. The above diagram gives us a map
\[ K \Omega[F] := K[pB[F] \xrightarrow[]{\bot} pC[F]] \to \mathrm{hoPB} \]
where we write $\mathrm{hoPB}$ for the homotopy pullback of the above diagram. Supposing we had isomorphisms
\[ K_n^{\mathrm{Gr}}[F] \xrightarrow[]{\cong} \pi_n K \Omega[F], \]
in order to prove the conjecture it would remain to show, by Whitehead's theorem, that the following diagram commutes for all $n \geq 0$:
\begin{equation*}
    \begin{tikzcd}
        K_n^{\mathrm{Gr}}[F] \arrow[r, "\cong"] \arrow[d, "\cong"]
        & K_n[F] \arrow[d, "\cong"] \\
        \pi_n K \Omega[F] \arrow[r]
        & \pi_n \mathrm{hoPB}
    \end{tikzcd}
\end{equation*}
where the right vertical isomorphism is induced by the following diagram:
\begin{equation*}
    \begin{tikzcd}
        K[F] \arrow[rr] \arrow[dr, dashed] \arrow[dd]
        & & * \arrow[dr, "\simeq"] \arrow[dd] & \\
        & \mathrm{hoPB} \arrow[rr, crossing over]
        & & K[iB\N \xrightarrow[]{\bot} iC\N] \arrow[dd]\\
        K\M \arrow[rr] \arrow[dr, "\simeq"]
        & & K\N \arrow[dr, "\simeq"] & \\
        & K[qC\M^2 \xrightarrow[]{\bot} qC\M] \arrow[rr] \arrow[from=uu, crossing over]
        & & K[qC\N^2 \xrightarrow[]{\bot} qC\N]
    \end{tikzcd}
\end{equation*}
The front and back faces of this cube are the homotopy pullback squares defining $K[F]$ and $\mathrm{hoPB}$ respectively, and the maps in the bottom square from the back to the front are induced by $\M \to \M^2, M \mapsto (M, 0)$, and similarly for $\N$.

We have for free that $K_0^{\mathrm{Gr}}[F] \cong \pi_0 K \Omega[F]$, and we can indeed prove this commutativity in the case of $n=0$:
\begin{proposition}
    The following diagram commutes:
    \begin{equation*}
        \begin{tikzcd}
            \KOmega[F] \arrow[r, "\Phi", "\cong"'] \arrow[d, "\cong"]
            & K_0[F] \arrow[d, "\cong"]\\
            \pi_0 K \Omega[F] \arrow[r]
            & \pi_0 \mathrm{hoPB}.
        \end{tikzcd}
    \end{equation*}
\end{proposition}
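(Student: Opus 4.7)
The plan is to check commutativity on generators $[X] = [(M, N, u)] \in \KOmega[F]$, since both compositions around the square are group homomorphisms. First, I would unwind the left vertical isomorphism: because $\Delta: pC[F] \to pB[F]$ is split by $\bot$, the long exact sequence of the homotopy fiber $K\Omega[F] = \mathrm{hofib}(KpB[F] \to KpC[F])$ collapses, identifying $\KOmega[F] = \mathrm{coker}(\Delta)$ with $\mathrm{ker}(\bot) \subseteq K_0 pB[F]$ and sending $[X]$ to the class of $[X] - [\Delta \bot X]$.

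Next, I would push $[X] - [\Delta \bot X]$ through the map $\pi_0 K\Omega[F] \to \pi_0 \mathrm{hoPB}$ induced by the square in the conjecture. Applying the forgetful functors $(M, N, u) \mapsto M \in qC\M^2$ and $(M, N, u) \mapsto N \in iB\N$, the resulting element has: component in $\pi_0 K[qC\M^2 \xrightarrow{\bot} qC\M]$ equal to $[(\top M, \bot M)] - [(\bot M, \bot M)]$, which under the equivalence $K\M \xrightarrow{\simeq} K[qC\M^2 \xrightarrow{\bot} qC\M]$ corresponds to $\chi(\top M) - \chi(\bot M) \in K_0 \M$; component in $\pi_0 K[iB\N \xrightarrow{\bot} iC\N]$ equal to $[N] - [\Delta \bot N]$; and compatibility witnessed in $K[qC\N^2 \xrightarrow{\bot} qC\N]$ by the path coming from $u$.

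On the other side, $\Phi[X]$ is the connected component of the explicit point $((\top M, \bot M), \alpha) \in K[qC\M \xrightarrow{F} qC\N]$, where $\alpha$ is the concatenation of the naive filtration of $(\top N, \bot N)$ with the quasi-isomorphism $(\top u, \bot u)$. I would track this point through the cube at the end: the equivalence $K\M \xrightarrow{\simeq} K[qC\M^2 \xrightarrow{\bot} qC\M]$ given by $M \mapsto (M, 0)$ sends $[\top M] - [\bot M]$ to a class homologous to $[(\top M, \bot M)] - [(\bot M, \bot M)]$, matching the first component above. Under the equivalence $\ast \xrightarrow{\simeq} K[iB\N \xrightarrow{\bot} iC\N]$, the ``filtration loop'' appearing in $\alpha$ corresponds to $[N] - [\Delta \bot N]$, by an argument parallel to the one given in \Cref{prop:K1Elements}. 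The compatibility paths in $K[qC\N^2 \xrightarrow{\bot} qC\N]$ arising from the two sides then agree up to the interchange between filtration and isomorphism $1$-simplices described in \Cref{rem:ComSquare}.

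The main obstacle is reconciling the two path descriptions in $K[qC\N^2 \xrightarrow{\bot} qC\N]$ and recognising the naive-filtration loop in $\alpha$ as representing $[N] - [\Delta \bot N]$; this is essentially the content of \Cref{prop:K1Elements}, which I would invoke in reverse. Once the three components are aligned, additivity (\Cref{lem:RelativeAdditivity}) reduces the comparison of the underlying points in $\mathrm{hoPB}$ to a routine bookkeeping check, completing the argument.
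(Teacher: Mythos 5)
Your overall plan—verify commutativity on generators $[X]=(M,N,u)$ by tracing $[X]-[\Delta\bot X]$ around both sides of the square and comparing the images in $\pi_0\mathrm{hoPB}$—is the same as the paper's. The problems are in the middle of the argument, where the two images actually have to be matched.

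First, the claim that the image has ``component in $\pi_0 K[iB\N\xrightarrow{\bot}iC\N]$ equal to $[N]-[\Delta\bot N]$'' does not parse: the space $K[iB\N\xrightarrow{\bot}iC\N]$ appearing in the conjecture's square is contractible (this is exactly what the $\simeq$ decorating the edge $*\to K[iB\N\xrightarrow{\bot}iC\N]$ in the cube asserts), so its $\pi_0$ is a point. You appear to be conflating it with $K\Omega\N=K[iB^q\N\xrightarrow{\bot}iC^q\N]$, which is restricted to acyclic complexes and has $\pi_0\cong K_1^{\mathrm{Gr}}\N$. Relatedly, a class in $\pi_0$ of a homotopy pullback is not a triple of classes in the corners—the path datum is essential—so the reconciliation cannot be done component-by-component of $\pi_0$'s as you describe; one must work with explicit points and paths.

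Second, the actual work—constructing the homotopy—is not supplied. What is needed is (a) a family of paths in $K[qC\M^2\xrightarrow{\bot}qC\M]$, coherent along the whole filtration-plus-$u$ path $\alpha$, realizing the equivalence $K\M\simeq K[qC\M^2\xrightarrow{\bot}qC\M]$ on each vertex, and (b) a lift of the resulting middle path to a path in the contractible space $K[iB\N\xrightarrow{\bot}iC\N]$, which is what lets one contract the filtration portion of $\alpha$ and replace the third component by $\left(\left(\substack{N\\\Delta\bot N}\right),\ \left(\substack{0\\0}\right)\rightarrowtail\left(\substack{\bot N\\\bot N}\right)\right)$. Neither invoking \Cref{prop:K1Elements} ``in reverse'' nor \Cref{lem:RelativeAdditivity} supplies this; the latter in particular plays no role here (it is used earlier, in \Cref{thm:RelativeK0}, to show that $\Phi$ respects short exact sequences). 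These explicit lifts and homotopies are the substance of the proof and cannot be elided.
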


\begin{proof}
    Suppose we have an object $X=(M, N, u)$ of $B[F]$ and observe that we send the element $[X]-[\Delta \bot X]$ of $K_0^{\mathrm{Gr}}[F]$ on the one hand to the element $\left( \left( \substack{X \\ \Delta \bot X} \right), \left( \substack{0 \\ 0} \right) \inj \left( \substack{\bot X \\ \bot X} \right) \right)$ in $K_0[pB[F] \xrightarrow[]{\bot} pC[F]]$, and then to
    \begin{equation*}
        \left(
        \left( \left( \substack{M \\ \Delta \bot M} \right), \left( \substack{0 \\ 0} \right) \rightarrowtail \left( \substack{\bot M \\ \bot M} \right) \right),
        \left(
        \begin{tikzcd}[row sep=small]
            \left( \substack{FM \\ F \Delta \bot M} \right) \arrow[d, "\simeq"]\\
            \left( \substack{\both N \\ \Delta \bot N} \right)
        \end{tikzcd}
        ,
        \begin{tikzcd}[column sep=tiny, row sep=small]
            \left( \substack{0 \\ 0} \right) \arrow[r, rightarrowtail] \arrow[d, "\simeq"] \arrow[dr]
            & \left( \substack{F \bot M \\ F \bot M} \right) \arrow[d, "\simeq"]\\
            \left( \substack{0 \\ 0} \right) \arrow[r, rightarrowtail]
            & \left( \substack{\bot N \\ \bot N} \right)
        \end{tikzcd}
        \right),
        \left(
        \left( \substack{N \\ \Delta \bot N} \right), \left( \substack{0 \\ 0} \right) \rightarrowtail \left( \substack{\bot N \\ \bot N} \right)
        \right)
        \right)
    \end{equation*}
    in $\pi_0 \mathrm{hoPB}$, recalling that the homotopy pullback of a diagram
    \begin{equation*}
        \begin{tikzcd}
            & X \arrow[d, "f"] \\
            Y \arrow[r, "g"]
            & Z
        \end{tikzcd}
    \end{equation*}
    is the space
    \[ \left\{ (x, \varphi, y) \in X \times Z^I \times Y \mid \varphi(0)=f(x), \varphi(1) = g(y) \right\}. \]
    On the other hand, $[X]-[\Delta \bot X]$ is sent to
    \[ \left( \left( \substack{\top M \\ \bot M} \right), \begin{tikzcd}[column sep=small] \left( \substack{0 \\ 0} \right) \arrow[r, rightarrowtail] & \left( \substack{\top N_{[0, 0]} \\ \bot N_{[0, 0]}} \right) \arrow[r, rightarrowtail] & \left( \substack{\top N_{[0, 1]} \\ \bot N_{[0, 1]}} \right) \arrow[r, rightarrowtail] & \cdots \arrow[r, rightarrowtail] & \left( \substack{\top N \\ \bot N} \right) & \left( \substack{\top FM \\ \bot FM} \right) \arrow[l, "\simeq"'] \end{tikzcd} \right) \]
    in $K_0[qC\M \xrightarrow[]{F} qC\N] \cong K_0[F]$, where we recall the arrows labelled ``$\rightarrowtail$" are given by the na\"ive filtration of $\top N$ and $\bot N$, and the arrow labelled by ``$\xleftarrow[]{\simeq}$" is given by the quasi-isomorphism $u: FM \to \both N$. This element is then sent to
    \begin{equation*}
        \left(
        \left( \left( \substack{(\top M, 0) \\ (\bot M, 0)} \right) , c_0 \right),
        \begin{tikzcd}
            \left( \left( \substack{(F \top M, 0) \\ (F \top M, 0)} \right), c_0 \right) \arrow[d, "\simeq"] \\
            \left( \left( \substack{(\top N, 0) \\ (\bot N, 0)} \right), c_0 \right) \\
            \vdots \arrow[u, rightarrowtail] \\
            \left( \left( \substack{(\top N_{[0, 0]}, 0) \\ (\bot N_{[0, 0]}, 0)} \right), c_0 \right) \arrow[u, rightarrowtail] \\
            \left( \left( \substack{0 \\ 0} \right), c_0 \right) \arrow[u, rightarrowtail]
        \end{tikzcd},
        \left( \left( \substack{0 \\ 0} \right), c_0 \right)
        \right)
    \end{equation*}
    in $\pi_0 \mathrm{hoPB}$, where $c_0$ is the constant path at the base point of the relevant space. We have the following path in $K[qC\M^2 \xrightarrow[]{\bot} qC\M]$:
    \begin{equation*}
        \left(
        \begin{tikzcd}
            \left( \substack{(\top M, 0) \\ (\bot M, 0)} \right) \arrow[d, rightarrowtail]\\
            \left( \substack{(\top M, \bot M) \\ (\bot M, \bot M)} \right)
        \end{tikzcd}
        ,
        \begin{tikzcd}
            \left( \substack{0 \\ 0} \right) \arrow[d, rightarrowtail] \arrow[dr, rightarrowtail]\\
            \left( \substack{0 \\ 0} \right) \arrow[r, rightarrowtail]
            & \left( \substack{\bot M \\ \bot M} \right)
        \end{tikzcd}
        \right)
    \end{equation*}
    It is fairly easy to see we have a similar path for every vertex of the path in the middle component, in a way that is compatible with the path, since this operation induces the homotopy equivalence between $K[w\A^2 \xrightarrow[]{\bot} w\A]$ and $Kw\A$ for any exact category with weak equivalences $w\A$. Thus our element of $\pi_0 \mathrm{hoPB}$ is equal to
    \begin{equation*}
        \left(
        \left( \left( \substack{M \\ \Delta \bot M} \right), \left( \substack{0 \\ 0} \right) \rightarrowtail \left( \substack{\bot M \\ \bot M} \right) \right),
        \begin{tikzcd}
            \left( \left( \substack{FM \\ F \Delta \bot M} \right), \left( \substack{0 \\ 0} \right) \rightarrowtail \left( \substack{F \bot M \\ F \bot M} \right) \right) \arrow[d, "\simeq"]\\
            \left( \left( \substack{\both N \\ \Delta \bot N} \right), \left( \substack{0 \\ 0} \right) \rightarrowtail \left( \substack{\bot N \\ \bot N} \right) \right)\\
            \vdots \arrow[u, rightarrowtail]\\
            \left( \left( \substack{\both N_{[0, 0]} \\ \Delta \bot N_{[0, 0]}} \right), \left( \substack{0 \\ 0} \right) \rightarrowtail \left( \substack{\bot N_{[0, 0]} \\ \bot N_{[0, 0]}} \right) \right) \arrow[u, rightarrowtail]\\
            \left( \left( \substack{0 \\ 0} \right), \left( \substack{0 \\ 0} \right) \right)\arrow[u, rightarrowtail]
        \end{tikzcd}
        ,
        \left( \left( \substack{0 \\ 0} \right), \left( \substack{0 \\ 0} \right) \right)
        \right)
    \end{equation*}
    Now, the path in the second component from $\left( \left( \substack{0 \\ 0} \right), \left( \substack{0 \\ 0} \right) \right)$ to $\left( \left( \substack{\both N \\ \Delta \bot N} \right), \left( \substack{0 \\ 0} \right) \rightarrowtail \left( \substack{\bot N \\ \bot N} \right) \right)$ can be lifted to a path in $K[iB\N \xrightarrow[]{\bot} iC\N]$, and so by contracting this path and replacing the point in the third component with $\left( \left( \substack{N \\ \Delta \bot N} \right), \left( \substack{0 \\ 0} \right) \rightarrowtail \left( \substack{\bot N \\ \bot N} \right) \right)$ we verify that our two elements of $\pi_0 \mathrm{hoPB}$ are equal and thus the square above is indeed commutative.
\end{proof}

\bibliographystyle{alpha}
\bibliography{refs}

\begin{thebibliography}{GSVW92}

\bibitem[GG87]{GG87}
Henri Gillet and Daniel~R. Grayson.
\newblock {The loop space of the $Q$-construction}.
\newblock {\em Illinois Journal of Mathematics}, 31(4):574 -- 597, 1987.

\bibitem[Gra12]{Gra12}
Daniel~R. Grayson.
\newblock Algebraic {$K$}-theory via binary complexes.
\newblock {\em J. Amer. Math. Soc.}, 25(4):1149--1167, 2012.

\bibitem[Gra16]{Gra16}
Daniel~R. Grayson.
\newblock Relative algebraic {$K$}-theory by elementary means.
\newblock \url{https://arxiv.org/abs/1310.8644}, 2016.

\bibitem[GSVW92]{GSVW92}
Thomas Gunnarsson, R.~Schw{\"a}nzl, R.~M. Vogt, and F.~Waldhausen.
\newblock An un-delooped version of algebraic {$K$}-theory.
\newblock {\em Journal of Pure and Applied Algebra}, 79(3):255--270, 1992.

\bibitem[KKW20]{KKW18}
Daniel Kasprowski, Bernhard Köck, and Christoph Winges.
\newblock {$K_1$}-groups via binary complexes of fixed length.
\newblock {\em Homology, Homotopy and Applications}, 22(1):203–213, 2020.

\bibitem[Nen98]{Nen98}
Alexander Nenashev.
\newblock {$K_1$} by generators and relations.
\newblock {\em Journal of Pure and Applied Algebra}, 131:195--212, 1998.

\bibitem[Wal85]{Wal85}
Friedhelm Waldhausen.
\newblock Algebraic {$K$}-theory of spaces.
\newblock In Andrew Ranicki, Norman Levitt, and Frank Quinn, editors, {\em Algebraic and Geometric Topology}, pages 318--419, Berlin, Heidelberg, 1985.

\bibitem[Wei13]{Wei13}
C.A. Weibel.
\newblock {\em The $K$-book: An Introduction to Algebraic $K$-theory}.
\newblock Graduate Studies in Mathematics. American Mathematical Society, 2013.

\end{thebibliography}

\end{document}